\newtheorem{theorem}{Theorem}[section]
\newtheorem{lemma}[theorem]{Lemma}
\newtheorem{corollary}[theorem]{Corollary}
\newtheorem{definition}[theorem]{Definition}
\newtheorem{thm}{Theorem}
\newcommand \Z {\mathbb Z}
\newcommand \opi {{\bf{O}}_{\pi}}
\newcommand \opd {{\bf{O}}_{p'}}
\newcommand \cent {{\bf{C}}}
\newcommand \F {\mathbb F}
\newcommand \irr {\textup{Irr}}
\newcommand \irrg {\textup{Irr}(G)}
\newcommand \ibr {{\textup{IBr}}_p}
\newcommand \ibrg {{\textup{IBr}}_p(G)}
\newcommand \ngd {{\bf{N}}_G(D)}
\newcommand \norm {{\bf{N}}}
\newcommand \dvd {\hbox {\big|}}
\newcommand \ndvd {\hbox {/}\kern-5pt\dvd}
\newcommand{\IBr}[1]{{\rm IBr}_p (#1)}
\def\irr#1{{\rm  Irr}(#1)}
\def\irri#1#2{{\rm Irr}_{(#1)} (#2)}
\def \< {\langle}
\def \> {\rangle}
\begin{document}

\title[Counting characters]{Counting characters in blocks of solvable groups with abelian defect group}


\author{James P. Cossey}
\author{Mark L. Lewis}

\address{Department of Theoretical and Applied Mathematics,
University of Akron,
Akron, OH 44325}

\email{cossey@uakron.edu}

\address{Department of Mathematical Sciences, Kent State
University, Kent, OH 44242}

\email{lewis@math.kent.edu}

\keywords{lifts, Brauer characters, Finite groups, Representations, Solvable groups} \subjclass[2000]{Primary 20C20, 20C15}

\begin{abstract}  If $G$ is a solvable group and $p$ is a prime, then the Fong-Swan theorem shows that given any irreducible Brauer character $\varphi$ of $G$, there exists a character $\chi \in \irrg$ such that $\chi^o = \varphi$, where $^o$ denotes the restriction of $\chi$ to the $p$-regular elements of $G$.  We say that $\chi$ is a {\it{lift}} of $\varphi$ in this case.  It is known that if $\varphi$ is in a block with abelian defect group $D$, then the number of lifts of $\varphi$ is bounded above by $|D|$.  In this paper we give a necessary and sufficient condition for this bound to be achieved, in terms of local information in a subgroup $V$ determined by the block $B$.  We also apply these methods to examine the situation when equality occurs in the $k(B)$ conjecture for blocks of solvable groups with abelian defect group.

\end{abstract}

\maketitle

\section{Introduction}

Let $G$ be a finite group and $p$ a fixed prime.  One of the main areas of research in the representation theory of finite groups is the interplay between the representation theory of $G$ over the complex numbers and the representation theory of $G$ over an algebraically closed field $\F$ of characteristic $p$.  In particular, we will look at the connection between the set $\irrg$ of ordinary irreducible characters of $G$, and the set $\ibrg$ of irreducible Brauer characters of $G$, which are complex-valued class functions on the set of $p$-regular elements of $G$ that in some sense describe the irreducible representations of $G$ over $\F$.  Let $\alpha^o$ denote the restriction of the complex valued class function $\alpha$ to the $p$-regular elements of $G$.  It has long been known that in this case, if $\chi \in \irrg$, then we have $$\chi^o = \sum_{\varphi \in \ibrg} d_{\chi \varphi} \varphi,$$ where the $\{d_{\chi \varphi} \}$ are uniquely defined non-negative integers, called the decomposition numbers of $\chi$.  (See \cite{blocks} for a more thorough treatment of the basic facts about Brauer characters and decomposition numbers of finite groups.)

If $G$ is solvable, there is more that can be said.  The Fong-Swan theorem (see \cite{bpi} or \cite{blocks} for a proof of the Fong-Swan theorem) says that if $\varphi \in \ibrg$, then there is an ordinary irreducible character $\chi$ of $G$ such that $\chi^o = \varphi$.  In this case, we say that $\chi$ is a lift of $\varphi$, and we define $L_{\varphi} = \{\chi \in \irrg \mid \chi^o = \varphi \}$ to be the set of lifts of $\varphi$.  Recently the authors (along with others) have been examining the structure of $L_{\varphi}$ and determining upper bounds on the size of this set (see \cite{bounds} and \cite{CoLeNa}).

Now, assume $G$ is solvable and $B$ is a block of $G$ with abelian defect group $D$ (again, see \cite{blocks} for the definition and properties of the defect group of a block).  It is known in this case (either as an immediate consequence of the $k(B)$ theorem, or, perhaps more easily, as a consequence of Corollary B of \cite{CoLeNa}, since here the vertex subgroup is equal to the defect group) that if $\varphi \in \ibr(B)$, then $$|L_{\varphi}| \leq |D|.$$  In this paper we examine a necessary and sufficient condition for equality to occur in this inequality.  In particular, given any block $B$ of a solvable group $G$ with defect group $D$, we will define a certain canonical subgroup $V_B$ containing (a conjugate of) $D$ as a Sylow $p$-subgroup, called the Fong subgroup of $B$.  Our first main theorem is the following:

\begin{thm}\label{necsuf}  Let $G$ be a solvable group and $B$ a block of $G$ with abelian defect group, let $V$ be the Fong subgroup for $B$, and let $D$ be a defect group for $B$ contained in $V$.  Suppose $\varphi \in \ibr(B)$.  Then $|L_{\varphi}| = |D|$ if and only if $D \leq \Z(\norm_V(D))$.
\end{thm}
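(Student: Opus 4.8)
The plan is to reduce the entire problem to the Fong subgroup $V$ and then carry out a local analysis there. First I would use the defining properties of $V_B$ to replace $\varphi$ by a Brauer character $\psi \in \ibr(b)$, where $b$ is a block of $V$ with the same abelian defect group $D$, in such a way that $|L_\varphi| = |L_\psi|$ and the bound $|D|$ is preserved; since $\norm_V(D)$ and $\Z(\norm_V(D))$ are computed inside $V$, the statement becomes one entirely about $V$. The key reformulation of the hypothesis is via Burnside's normal $p$-complement theorem: because $D$ is an abelian Sylow $p$-subgroup of $V$, the condition $D \le \Z(\norm_V(D))$ is equivalent to $\norm_V(D) = \cen_V(D)$, and hence to $V$ having a normal $p$-complement $U = \opd(V)$. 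Thus the theorem reduces to showing that $|L_\psi| = |D|$ if and only if $V$ is $p$-nilpotent.

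For the direction assuming $V$ has a normal $p$-complement, I would argue directly. Here $V = U \rtimes D$ with $U = \opd(V)$ a normal Hall $p'$-subgroup and $V/U \cong D$ abelian, so every $p$-regular element of $V$ lies in $U$. Since $b$ has full defect $D$, after replacing $\theta$ by a conjugate the character $\theta \in \irr{U}$ lying under $\psi$ is $V$-invariant; as $\gcd(|U|,|V:U|) = 1$ it extends to some $\hat\theta \in \irr{V}$, and $\irr{V \mid \theta} = \{\hat\theta\lambda \mid \lambda \in \irr{D}\}$ by Gallagher's theorem. Restricting to the $p$-regular elements (that is, to $U$) kills each $\lambda$, so every character in $\irr{V \mid \theta}$ restricts to $\psi$; conversely any lift of $\psi$ lies over $\theta$. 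Hence $L_\psi = \irr{V \mid \theta}$ has exactly $|\irr{D}| = |D|$ elements, as wanted.

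The harder direction is the converse, which I would prove in contrapositive form: if $V$ has no normal $p$-complement, then $|L_\psi| < |D|$. Writing $W = \norm_V(D)$, the plan is to attach to each lift $\chi \in L_\psi$ a linear character of $D$ --- its vertex character in the sense of the vertex-pair theory of \cite{CoLeNa} --- and to show that this assignment is injective and lands inside the set $\irr{D}^{W}$ of $W$-invariant linear characters of $D$. Granting this, $|L_\psi| \le |\irr{D}^{W}| = |D : [D,W]|$. Since $D$ is abelian we always have $D \le \cen_V(D) \le W$, and the failure of $p$-nilpotency forces $W \ne \cen_V(D)$, so some element of $W$ acts nontrivially on $D$ and $[D,W] \ne 1$; therefore $|L_\psi| \le |D:[D,W]| < |D|$.

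I expect the main obstacle to be exactly the injectivity and $W$-invariance of the vertex-character assignment used in the last paragraph: controlling how the local data at $W = \norm_V(D)$ constrains the possible vertex characters of lifts in a block of abelian defect. This is where the local hypotheses genuinely enter, and it is the step that must be extracted from the vertex theory of \cite{CoLeNa} together with the structure of the Fong subgroup; the two easier directions (the $\Leftarrow$ construction and the Burnside reformulation) are essentially bookkeeping once the reduction to $V$ is in place.
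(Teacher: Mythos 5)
Your reduction to the Fong subgroup, your Burnside reformulation ($D\le \Z(\norm_V(D))$ iff $\norm_V(D)=\cent_V(D)$ iff $V$ is $p$-nilpotent, since $D$ is an abelian Sylow $p$-subgroup of $V$), and your proof of the direction ``$D\le \Z(\norm_V(D))\Rightarrow |L_\varphi|=|D|$'' are all sound. That direction is even a pleasant alternative to the paper's route: the paper passes from $V$ to $\norm_V(D)$ via Dade's character-triple isomorphism above the Glauberman correspondence and then observes that $\norm_V(D)$ is a direct product $H\times D$, whereas you stay in $V=U\rtimes D$ and use extension of the invariant $\theta$ plus Gallagher. (One small correction: the invariance of $\theta$ under $V$ is not a consequence of full defect; it is the defining termination condition of the Fong-subgroup recursion.)

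The converse is where your proposal has a genuine gap, and you have correctly located it but underestimated it. The mechanism you propose --- an injective assignment $\chi\mapsto\delta_\chi$ from $L_\psi$ into the $W$-invariant characters $\irr{D}^W$ --- is not available from the vertex theory of \cite{CoLeNa}, and is in tension with how lifts actually distribute over $\irr D$. What \cite{CoLeNa} (and Lemma \ref{all orbits} of this paper) gives is the orbit-sum bound $|L_\psi|\le\sum_\lambda |W:W_\lambda|=|D|$, with up to $|W:W_\delta|$ lifts lying over the orbit of a single, possibly non-invariant, $\delta$; indeed when $|L_\psi|=|D|$ the lifts fill out \emph{every} orbit with multiplicity equal to the orbit length. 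So neither the injectivity nor the landing in $\irr{D}^W$ can be ``extracted'' from that theory; your inequality $|L_\psi|\le|\irr{D}^W|$ is essentially an unproved assertion at least as strong as the theorem itself. The paper's actual proof of this direction (Theorem \ref{all center}) is a genuine induction: it shows $\alpha$ is fully ramified over $H/N$ via Lemma 4.2 of \cite{Alperin}, reduces to $D$ minimal normal with $D=[D,H]$, invokes Dolfi's large-orbit theorem to produce $\delta,\sigma\in\irr D$ with $\cent_H(\delta)\cap\cent_H(\sigma)=N$, shows $G_\delta/ND$ and $G_\sigma/ND$ are abelian with product $G/ND$, and derives a contradiction from It\^o's theorem on products of abelian groups. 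Note that Dolfi's theorem is where full solvability (not just $p$-solvability) enters --- the paper flags this explicitly --- and your outline gives no indication of where solvability would be used, which is a further sign that the proposed counting shortcut cannot work as stated.
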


One might perhaps wonder if it would be more natural to consider the subgroup $\ngd$ rather than $\norm_V(D)$ in the statement of Theorem \ref{necsuf}.  We will show that there is in fact a counterexample to show that the statement does not hold if we replace $\norm_V(D)$ with $\ngd$.

The assumption that $|L_{\varphi}| = |D|$ in the above theorem may seem a bit strong, in light of the $k(B)$ problem.  Our next result gives a number of equivalent conditions, in the case that the defect group is abelian.  Given a Brauer character $\varphi$ of $G$, we let $\irr{G : \varphi} = \{ \chi \in \irrg \mid d_{\chi \varphi} \neq 0 \}$.

\begin{thm}\label{3conditions}  Let $G$ be a solvable group and $B$ a block of $G$ with abelian defect group $D$ and let $\varphi \in \ibrg$.  Then the following conditions are equivalent:

\begin{enumerate}
\item $|L_{\varphi}| = |D|$.

\item $\irr{G : \varphi} = L_{\varphi}$.

\item $|\irr{G : \varphi}| = |D|$.
\end{enumerate}
\end{thm}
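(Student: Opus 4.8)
The plan is to read all three conditions off the column of $\varphi$ in the decomposition matrix of $B$. First I record the inclusion that costs nothing: if $\chi \in L_\varphi$ then $\chi^o = \varphi$, so by the linear independence of $\ibrg$ on the $p$-regular classes the row of $\chi$ is the unit row $e_\varphi$; in particular $d_{\chi\varphi} = 1 \neq 0$, so $\chi \in \irr{G:\varphi}$. Thus $L_\varphi \subseteq \irr{G:\varphi}$. Writing $c_{\varphi\varphi} = \sum_{\chi} d_{\chi\varphi}^2$ for the corresponding diagonal Cartan invariant and using $d_{\chi\varphi} \ge 1$ for each $\chi \in \irr{G:\varphi}$, I obtain
\[
 |L_\varphi| \ \le\ |\irr{G:\varphi}| \ \le\ c_{\varphi\varphi},
\]
where the second inequality is an equality exactly when every entry of the column of $\varphi$ is $0$ or $1$, and the first is an equality exactly when $\irr{G:\varphi} = L_\varphi$.

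Everything then follows from a single statement:
\[
 |\irr{G:\varphi}| \ \le\ |D|, \qquad \text{with equality iff every } \chi \in \irr{G:\varphi} \text{ is a lift of } \varphi.
\]
Granting this, the cycle closes immediately. If (1) holds then $|D| = |L_\varphi| \le |\irr{G:\varphi}| \le |D|$ forces $|\irr{G:\varphi}| = |D| = |L_\varphi|$, so (3) holds and, by the inclusion above, $\irr{G:\varphi} = L_\varphi$, giving (2). If (3) holds then the equality clause says every member of $\irr{G:\varphi}$ is a lift, i.e. $\irr{G:\varphi} = L_\varphi$, which is (2). Finally, if (2) holds then $\irr{G:\varphi} = L_\varphi$ is exactly the equality case, so $|L_\varphi| = |\irr{G:\varphi}| = |D|$, which is (1).

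For the upper bound I would use the classical fact that the Cartan invariants of a block with defect group $D$ are bounded above by $|D|$, so $|\irr{G:\varphi}| \le c_{\varphi\varphi} \le |D|$; this is the abelian-defect companion of the bound $|L_\varphi| \le |D|$ already available from Corollary B of \cite{CoLeNa}. One direction of the equality clause is clean: if every $\chi \in \irr{G:\varphi}$ is a lift, then each contributing row is the unit row $e_\varphi$, whence $c_{\varphi\psi} = 0$ for all $\psi \neq \varphi$ and $\varphi$ is decoupled in the Cartan matrix; since the block algebra $B$ is indecomposable, its Cartan matrix admits no such decoupling unless $\ibr(B) = \{\varphi\}$, and in that case the $1 \times 1$ Cartan matrix has its unique elementary divisor equal to $|D|$, so $|\irr{G:\varphi}| = c_{\varphi\varphi} = |D|$.

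The main obstacle is the reverse, rigidity half of the equality clause: that $|\irr{G:\varphi}| = |D|$ forces all of $\irr{G:\varphi}$ to consist of lifts. The squeeze above already delivers $c_{\varphi\varphi} = |D|$ and a $0/1$ column, so it remains to upgrade ``$d_{\chi\varphi} = 1$'' to ``$\chi^o = \varphi$'', equivalently to rule out that $B$ has more than one irreducible Brauer character. This cannot be extracted from the shape of the Cartan matrix alone, since a $0/1$ column with maximal diagonal $|D|$ need not decouple $\varphi$ in an abstract positive-definite integer matrix; so here I would pass to the local structure of $B$. Using the Fong subgroup $V = V_B$ together with the Fong and Fong--Reynolds correspondences (the Clifford-theoretic machinery underlying Theorem \ref{necsuf}), one reduces to a block with normal abelian defect group $D$ in a group of the form $D \rtimes E$ with $E$ a $p'$-group, where $\ibr$ is indexed by the $E$-orbits on $\irr{D}$ and the decomposition matrix is the explicit orbit matrix. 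In that model one checks directly that $|\irr{G:\varphi}| = |D|$ forces the orbit data to be trivial, so that $B$ is nilpotent and Morita equivalent to $\F D$, whence every character over $\varphi$ is a genuine lift. Setting up this reduction so that it preserves both the column of $\varphi$ and the order $|D|$ — that is, transporting the equality case down to $D \rtimes E$ and back — is the real work, and is precisely the point at which solvability and abelian defect are used.
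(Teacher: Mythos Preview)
Your Cartan-matrix framing is a genuinely different angle, and the $(2)\Rightarrow(1)$ argument is clean and correct: once every $\chi\in\irr{G:\varphi}$ has unit row $e_\varphi$, the column of $\varphi$ decouples in $C$, indecomposability of $B$ forces $\ell(B)=1$, and the single elementary divisor $|D|$ of the $1\times 1$ Cartan matrix gives $|L_\varphi|=|\irr{G:\varphi}|=c_{\varphi\varphi}=|D|$. The paper does this instead by reducing to a normal Sylow $p$-subgroup and counting constituents of $(\varphi_H)^G$ (Lemma~\ref{all lifts con}); your route is a pleasant alternative.

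The gap is in $(3)\Rightarrow(2)$. You correctly isolate the residual content as ruling out $\ell(B)>1$ once the $\varphi$-column is $0/1$ with $c_{\varphi\varphi}=|D|$, but the paragraph beginning ``pass to the local structure'' does not prove it. The model you propose---a block of $D\rtimes E$ with decomposition matrix the ``explicit orbit matrix''---is not what Fong reduction produces: after Fong--Reynolds you land in the Fong subgroup $V$, where $D$ is Sylow but typically not normal, and the block covers a fixed $V$-invariant $\alpha\in\irr{\opd(V)}$; a further Dade/Glauberman character-triple step (Theorem~\ref{normalizer}) is needed to reach $\norm_V(D)$, and even there the relevant characters all lie over a prescribed $\beta\in\irr{\cent_{\opd(V)}(D)}$, so the decomposition matrix is not simply an orbit matrix. ``One checks directly'' is exactly the assertion to be proved. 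What the paper actually does after that reduction is far more elementary than nilpotency or Morita equivalence: with $D$ a normal abelian Sylow and $H$ a $p$-complement, write $(\varphi_H)^G=\sum_{i=1}^{m}\psi_i+\Theta$ with $\{\psi_i\}=\irr{G:\varphi}$; comparing degrees gives $|D|\,\varphi(1)\ge m\,\varphi(1)$, and $m=|D|$ forces $\Theta=0$ and $\psi_i(1)=\varphi(1)$ for all $i$, so every $\psi_i$ is a lift (Lemma~\ref{all lifts}). This same degree identity also yields $|\irr{G:\varphi}|\le|D|$ directly, so one never needs the bound $c_{\varphi\varphi}\le|D|$, which in any case is not a ``classical fact'' for arbitrary blocks and, in the $p$-solvable setting, is established precisely via the reduction you have not carried out.
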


Theorem \ref{necsuf} is example of a result showing that a ``global'' condition is equivalent to a ``local'' condition.  This has been a common theme in looking at lifts.  In Theorem \ref{3conditions}, we show that the global condition of Theorem \ref{necsuf} is equivalent to two other global conditions, one of which does not involve any discussion of lifts.  To our knowledge, this is the first time a global condition on lifts has been shown to be equivalent to a global condition that does not involve lifts.

Note that the Fong-Swan theorem actually applies to $p$-solvable groups.  Furthermore, most of our results also apply not just to solvable groups, but to $p$-solvable groups, and will be proved in that setting.  However, one direction of Theorem \ref{necsuf} requires the group $G$ (or, more specifically, a $p$-complement in $G$) to be solvable.

 We feel it is important to point out that the proofs of the main results are ``character theoretic'' in nature, in that the proofs only use the characters and not the underlying representations.  In particular this means that all of the results in this paper also hold in the setting of Isaacs' ``$\pi$-partial'' characters of $\pi$-separable groups (see \cite{bpi}) and the proofs would be the same (in particular, see \cite{slattery1} and \cite{slattery2} for a discussion of the $\pi$-block theory).  Again, we still would need the solvable hypothesis for the one direction of Theorem \ref{necsuf} even in this setting.

We will conclude with a section that briefly discusses how our results fit into the context of nilpotent blocks.  Some of these results are known for nilpotent $p$-blocks, though there does not yet seem to be a theory of nilpotent $\pi$-blocks.  In the final section we will show how our results yield a version of the Broue-Puig theorem for nilpotent $\pi$-blocks with abelian defect group, while simultaneously giving a new proof of that Broue-Puig theorem for nilpotent $p$-blocks of solvable groups with abelian defect group.

\section{The pair $(V_B, B_V)$ and character triple isomorphisms}

Let $B$ be a block of a solvable group $G$.  In this section we will define the Fong pair $(V_B, B_V)$ and discuss some of the basic properties of this pair.  The following definition is essentially simply naming the end result of the Fong reduction in a solvable group (see Theorems 9.14 and 10.20 of \cite{blocks} for more details).

\begin{definition}\label{fongsubgroup}  Let $G$ be a solvable group and let $B$ be a block of $G$.  Write $M = \opd(G)$, and let $\alpha \in \irr{M}$ be covered by $B$.  If $\alpha$ is invariant in $G$, then we define the Fong pair $(V_B, B_V)$ to be $(G, B)$.  If $\alpha$ is not invariant in $G$, then we let $T$ be the stabilizer of $\alpha$ in $G$, and note that by the Fong-Reynolds theorem (see Theorem 9.14 of \cite{blocks}), there exists a block $B_1$ of $T$ covering $\alpha$ such that induction is a bijection from $B_1$ to $G$.  In this case we define the Fong pair $(V_B, B_V)$ for $B$ to be a Fong pair for $B_1$.
\end{definition}

It is perhaps worth mentioning in the above definition that in general one could have $\opd(T) > \opd(G)$.  We say $V_B$ is the Fong subgroup for $B$, and we say $B_V$ is the corresponding block.  We note some immediate consequences of the above definition.  By the Fong-Reynolds theorem, any Sylow $p$-subgroup of $V_B$ is a defect group for the block $B$.  Moreover, obviously one could have chosen a different character $\alpha$ of $M$ covered by $B$, but a different choice would have yielded a conjugate Fong pair.  Finally, note that induction preserves decomposition numbers.   In other words, with the above notation, if $\chi \in \irr{B}$ and $\varphi \in \IBr{B}$ are induced by $\chi_1 \in \irr{B_1}$ and $\varphi_1 \in \IBr{B_1}$, respectively, then $d_{\chi \varphi} = d_{\chi_1 \varphi_1}$.

The following lemma is clear from the above discussion.

\begin{lemma}\label{fongred}  Let $G$ be a solvable group and $B$ a block of $G$ with Fong pair $(V_B, B_V)$.  Then induction is a bijection from $B_V$ to $B$, and the induction map preserves the decomposition numbers $d_{\chi \varphi}$.
\end{lemma}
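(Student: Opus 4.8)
The plan is to argue by induction on $|G|$, following the recursive structure of Definition \ref{fongsubgroup}. For the base case, if $\alpha$ is invariant in $G$ then by definition $(V_B, B_V) = (G, B)$, and induction from $B_V = B$ to $B$ is the identity map, so the claim holds trivially. For the inductive step, assume $\alpha$ is not invariant, let $T$ be the stabilizer of $\alpha$ in $G$, and let $B_1$ be the block of $T$ covering $\alpha$ furnished by the Fong--Reynolds theorem, so that induction $\chi_1 \mapsto \chi_1^G$ is a bijection $\irr{B_1} \to \irr{B}$; by definition $(V_B, B_V)$ is then a Fong pair for $B_1$.

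Since $\alpha$ is not invariant we have $T < G$, so $|T| < |G|$ and the inductive hypothesis applies to the block $B_1$ of $T$: induction from $B_V$ to $T$ is a bijection $\irr{B_V} \to \irr{B_1}$ and $\IBr{B_V} \to \IBr{B_1}$ preserving decomposition numbers. On the other side, Fong--Reynolds (Theorem 9.14 of \cite{blocks}) gives the bijection $\irr{B_1} \to \irr{B}$ already noted, and its Brauer-character analogue gives a bijection $\IBr{B_1} \to \IBr{B}$. By transitivity of induction, $(\psi^T)^G = \psi^G$, so the composite of the two maps is precisely induction from $V_B$ to $G$; being a composite of bijections on the relevant character sets, it is a bijection $\irr{B_V} \to \irr{B}$ and $\IBr{B_V} \to \IBr{B}$, as required.

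It remains to verify that the induction map preserves the decomposition numbers. For a single Fong--Reynolds step this is the fact recorded just before the lemma, and I would supply its short justification: for $\chi_1 \in \irr{B_1}$ one has $(\chi_1^G)^o = (\chi_1^o)^G$, because induction of class functions commutes with restriction to the $p$-regular elements, and since $\varphi_1 \mapsto \varphi_1^G$ carries $\IBr{B_1}$ bijectively onto $\IBr{B}$, expanding $(\chi_1^o)^G = \sum_{\varphi_1} d_{\chi_1 \varphi_1}\, \varphi_1^G$ identifies $d_{\chi_1^G,\, \varphi_1^G}$ with $d_{\chi_1 \varphi_1}$. Preservation of decomposition numbers is clearly transitive under composition, so combining this step with the inductive hypothesis for $B_1$ yields preservation for the full induction map from $V_B$ to $G$, completing the induction.

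The only point that needs any care is the compatibility $(\chi_1^G)^o = (\chi_1^o)^G$ together with the irreducibility and pairwise distinctness of the induced Brauer characters $\varphi_1^G$; both are standard consequences of Clifford theory and of Fong--Reynolds, which is why the lemma may fairly be called clear. Everything else is simply bookkeeping along the recursion defining the Fong pair.
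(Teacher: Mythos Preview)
Your proposal is correct and is essentially what the paper intends: the paper gives no explicit proof, merely stating that the lemma ``is clear from the above discussion,'' meaning the recursive Fong--Reynolds construction in Definition~\ref{fongsubgroup} together with the observation that induction preserves decomposition numbers. Your induction on $|G|$ is precisely the unfolding of that recursion, so the approaches coincide.
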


This leads to the following immediate corollary.

\begin{corollary}\label{blockcomparison} Let $G$ be a solvable group and $B$ a block of $G$ with Fong pair $(V_B, B_V)$.  Let $\varphi \in \IBr{B}$ and let $\widehat{\varphi}$ denote the corresponding character of $V_B$.  Then induction is a bijection from $L_{\widehat{\varphi}}$ to $L_{\varphi}$ and from $\irr{V_B : \widehat{\varphi}}$ to $\irr{G : \varphi}$.
\end{corollary}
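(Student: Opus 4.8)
The plan is to deduce everything from Lemma~\ref{fongred}, which gives a bijection $\chi_1 \mapsto \chi_1^G$ from $\irr{B_V}$ onto $\irr{B}$ and from $\IBr{B_V}$ onto $\IBr{B}$, preserving decomposition numbers. Under this bijection $\widehat{\varphi} \in \IBr{B_V}$ corresponds to $\varphi \in \IBr{B}$, so that $\widehat{\varphi}^G = \varphi$ and, for each $\psi \in \IBr{B_V}$ and each $\chi_1 \in \irr{B_V}$, we have $d_{\chi_1 \psi} = d_{\chi_1^G, \psi^G}$.

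First I would record the observation that both $L_{\varphi}$ and $\irr{G : \varphi}$ are contained in $\irr{B}$: if $\chi \in \irrg$ satisfies $\chi^o = \varphi$, or merely $d_{\chi \varphi} \neq 0$, then $\chi$ is linked to the Brauer character $\varphi \in \IBr{B}$ and hence lies in $B$; the analogous statements hold for $\widehat{\varphi}$ inside $V_B$. This is what allows the block bijection of Lemma~\ref{fongred} to be applied to every character under consideration, and I expect this containment check to be the one genuinely load-bearing point, even though it is short.

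For the set $\irr{G : \varphi}$ the bijection is then immediate. Given $\chi_1 \in \irr{B_V}$, preservation of decomposition numbers gives $d_{\chi_1^G, \varphi} = d_{\chi_1^G, \widehat{\varphi}^G} = d_{\chi_1, \widehat{\varphi}}$, so $d_{\chi_1, \widehat{\varphi}} \neq 0$ if and only if $d_{\chi_1^G, \varphi} \neq 0$. Combined with the containment observation, this shows $\chi_1 \in \irr{V_B : \widehat{\varphi}}$ if and only if $\chi_1^G \in \irr{G : \varphi}$; since $\chi_1 \mapsto \chi_1^G$ is already a bijection $\irr{B_V} \to \irr{B}$, restricting it yields the desired bijection.

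For the lifts I would characterize the condition $\chi_1^o = \widehat{\varphi}$ in terms of the full decomposition vector: since $\chi_1^o = \sum_{\psi \in \IBr{B_V}} d_{\chi_1 \psi}\, \psi$, we have $\chi_1 \in L_{\widehat{\varphi}}$ exactly when $d_{\chi_1, \widehat{\varphi}} = 1$ and $d_{\chi_1 \psi} = 0$ for every $\psi \neq \widehat{\varphi}$ in $\IBr{B_V}$. Applying $d_{\chi_1 \psi} = d_{\chi_1^G, \psi^G}$ and using that $\psi \mapsto \psi^G$ is a bijection from $\IBr{B_V}$ onto $\IBr{B}$ carrying $\widehat{\varphi}$ to $\varphi$, this condition transports verbatim to $d_{\chi_1^G, \varphi} = 1$ and $d_{\chi_1^G, \varphi'} = 0$ for all $\varphi' \neq \varphi$ in $\IBr{B}$, that is, to $(\chi_1^G)^o = \varphi$. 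Hence $\chi_1 \in L_{\widehat{\varphi}}$ if and only if $\chi_1^G \in L_{\varphi}$, and restricting the bijection of Lemma~\ref{fongred} once more finishes the argument. The only step requiring care beyond the containment observation is translating this entire decomposition vector rather than a single entry, but that is handled cleanly by the bijectivity of $\psi \mapsto \psi^G$ on $\IBr{B_V}$.
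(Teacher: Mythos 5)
Your proposal is correct and follows the same route as the paper, which simply declares the corollary an immediate consequence of Lemma~\ref{fongred}; you have just written out the details of why the preservation of decomposition numbers under the block bijection transports both $\irr{V_B : \widehat{\varphi}}$ and the full decomposition vector characterizing membership in $L_{\widehat{\varphi}}$.
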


We will need to use the theory of character triples (see Chapter 11 of \cite{text} and Chapter 8 of \cite{blocks} for more details).  The following lemma is essentially contained in the proof of Corollary B of \cite{CoLeNa}.  Notice that for isomorphic character triples whose normal subgroups are $p'$-groups that this provides a correspondence of the Brauer characters in the blocks covering the character of the normal subgroup.

\begin{lemma} \label{triples}
Let $p$ be a prime, let $(G,N,\theta)$ and $(\Gamma,M,\gamma)$ be isomorphic character triples where $N$ and $M$ are $p'$-groups and $G$ or $\Gamma$ is a $p$-solvable group.  Suppose $\chi \in \irr{G \mid \theta}$ and $\psi \in \irr{\Gamma \mid \gamma}$ correspond under the character triple isomorphism.
\begin{enumerate}
\item Then $\chi^o$ is irreducible if and only if $\psi^o$ is irreducible.
\item The character triple isomorphism yields a bijection from $\ibr(G \mid \theta)$ to $\ibr(\Gamma \mid \gamma)$.
\item If $\chi^o$ is irreducible, then the character triple isomorphism restricts to bijections from $\irr{G : \chi^o}$ to $\irr{\Gamma : \varphi^o}$ and from $L_{\chi^o}$ to $L_{\psi^o}$.
\end{enumerate}
\end{lemma}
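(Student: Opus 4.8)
The plan is to prove that the character triple isomorphism preserves the decomposition numbers lying over $\theta$ and $\gamma$, and then to read all three assertions off of that single fact. Write $\sigma : \irr{G \mid \theta} \to \irr{\Gamma \mid \gamma}$ for the degree-compatible bijection supplied by the isomorphism, so that $\sigma(\chi) = \psi$. First I would note that, because $N$ and $M$ are $p'$-groups, $\theta$ and $\gamma$ equal their own restrictions to $p$-regular elements and hence are irreducible Brauer characters of $N$ and $M$; by Clifford theory a decomposition number $d_{\eta\varphi}$ (with $\eta \in \irrg$ and $\varphi \in \ibrg$) can be nonzero only when both $\eta$ and $\varphi$ lie over $\theta$. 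Thus everything takes place inside $\irr{G \mid \theta}$ and $\ibr(G \mid \theta)$, and it suffices to establish the following claim: there is a bijection $\sigma^* : \ibr(G \mid \theta) \to \ibr(\Gamma \mid \gamma)$ with $d_{\eta\varphi} = d_{\sigma(\eta)\,\sigma^*(\varphi)}$ for all $\eta \in \irr{G \mid \theta}$ and $\varphi \in \ibr(G \mid \theta)$.

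To prove the claim I would pass to projective representations, following Chapter 11 of \cite{text}. Fix a projective representation $\mathcal{P}$ of $G$ associated with $\theta$; since $\theta$ is $G$-invariant its factor set $\alpha$ is constant on cosets of $N$ and so may be regarded as a factor set of $Q := G/N$, and the characters in $\irr{G \mid \theta}$ correspond bijectively and with proportional degrees to the irreducible $\alpha$-projective characters of $Q$. The hypothesis that $N$ is a $p'$-group enters decisively here: it makes $\theta$ simultaneously an ordinary and an irreducible Brauer character of $N$, and it ties both $\irr{G \mid \theta}$ and $\ibr(G \mid \theta)$ to the ordinary and modular projective characters of the single quotient $Q$ governed by one factor set, in such a way that the decomposition numbers over $\theta$ coincide with the projective decomposition numbers of the twisted group algebra of $Q$ (with $p$-solvability, hence Fong--Swan, ensuring these sets behave as expected). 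The definition of a character triple isomorphism then provides a group isomorphism $Q = G/N \cong \Gamma/M =: Q'$ under which the cohomology class of $\alpha$ is carried to that of the factor set $\alpha'$ attached to $\gamma$; consequently the twisted group algebras of $Q$ and $Q'$ are identified over $\C$ and after reduction modulo $p$. This identification yields compatible bijections on the ordinary and modular projective characters that preserve projective decomposition numbers, and transporting back through the two parametrizations produces $\sigma^*$ together with the equality $d_{\eta\varphi} = d_{\sigma(\eta)\,\sigma^*(\varphi)}$, proving the claim and, in particular, part (2).

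Granting the claim, the remaining parts are formal. Writing $\chi^o = \sum_{\varphi} d_{\chi\varphi}\,\varphi$, the claim gives $\psi^o = \sum_{\varphi} d_{\chi\varphi}\,\sigma^*(\varphi)$, so the two restrictions have identical coefficient patterns; hence $\chi^o$ is irreducible exactly when $\psi^o$ is, which is part (1). For part (3), suppose $\chi^o$ is irreducible, say $\chi^o = \varphi_0$, so that $\psi^o = \sigma^*(\varphi_0) =: \varphi_0'$. Since a nonzero decomposition number forces membership over $\theta$, we have $\irr{G : \chi^o} = \{\eta \in \irr{G \mid \theta} : d_{\eta\varphi_0} \neq 0\}$, and $\sigma$ carries this bijectively onto $\{\eta' \in \irr{\Gamma \mid \gamma} : d_{\eta'\varphi_0'} \neq 0\} = \irr{\Gamma : \psi^o}$. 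Likewise $L_{\chi^o} = \{\eta : d_{\eta\varphi_0} = 1 \text{ and } d_{\eta\varphi} = 0 \text{ for } \varphi \neq \varphi_0\}$ is carried by $\sigma$ onto $L_{\psi^o}$.

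The main obstacle is the claim of the second paragraph: a character triple isomorphism is defined purely in terms of ordinary character theory, so the real work is to see that it nonetheless transports the modular data. This is precisely where the $p'$-hypothesis on $N$ and $M$ is indispensable, since it is what forces $\theta$ and $\gamma$ to be irreducible Brauer characters and makes the factor set (valued in roots of unity) the correct common invariant in both characteristic $0$ and characteristic $p$. I expect the verification that the compatibilities defining $\sigma$ --- proportionality of degrees together with compatibility with multiplication by $\irr{Q}$ and with induction and restriction --- force $\alpha$ and $\alpha'$ to be cohomologous under $Q \cong Q'$ to be routine but delicate; this is the portion that is essentially contained in the proof of Corollary B of \cite{CoLeNa}.
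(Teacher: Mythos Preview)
Your approach is genuinely different from the paper's, and while plausible in outline, it leaves unresolved precisely the step you yourself flag as the ``main obstacle.''

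The paper's proof never mentions factor sets, twisted group algebras, or decomposition numbers. Instead it exploits two simple facts: (i) in a $p$-solvable group the restriction $\chi^o$ is determined by $\chi_H$, where $H$ is a Hall $p$-complement; and (ii) since $N$ is a $p'$-group we have $N \le H$, so $H$ is an intermediate subgroup and the character triple isomorphism respects restriction to it. Thus $\chi^o$ is reducible iff, by Fong--Swan, $\chi^o = \alpha^o + \beta^o$ for some $\alpha,\beta \in \irr{G}$, iff $\chi_H = \alpha_H + \beta_H$, iff $\psi_{H^*} = (\alpha^*)_{H^*} + (\beta^*)_{H^*}$, iff $\psi^o$ is reducible. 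Lifts and the sets $\irr{G:\chi^o}$ are handled by the same device: $\eta \in L_{\chi^o}$ iff $\eta_H = \chi_H$, and $\eta \in \irr{G:\chi^o}$ iff $\chi_H$ is a constituent of $\eta_H$. The bijection on Brauer characters is then \emph{defined} by sending $\varphi$ to $(\tau^*)^o$ for any lift $\tau$ of $\varphi$, and checked directly to be well-defined and bijective. The whole argument is a few lines.

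Your route through twisted group algebras would, if completed, give the stronger conclusion that all decomposition numbers over $\theta$ are preserved, and (1)--(3) would indeed drop out. But the key step---that an abstract character triple isomorphism forces the factor sets of $\theta$ and $\gamma$ to be cohomologous under $G/N \cong \Gamma/M$---is not part of the definition and is not routine: Theorem~11.28 of \cite{text} goes in the \emph{opposite} direction (cohomologous factor sets yield isomorphic triples), and recovering the cohomology class from the purely character-theoretic data of the isomorphism is real work you have not carried out. You would also need the factor set to take $p'$-values in order to reduce modulo $p$; this does follow from $N$ being a $p'$-group (since $\theta$ extends across any $p$-subgroup by coprimality, so the obstruction class has $p'$-order), but it must be said. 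In short, the paper trades your structural claim for a direct computation with restrictions to Hall complements and obtains a complete, elementary proof; your outline is heavier and, as written, incomplete at its crux.
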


\begin{proof}
Notice that $G$ is $p$-solvable if and only if $\Gamma$ is $p$-solvable since $N$ and $M$ are both $p'$-groups.  Thus, assuming one is $p$-solvable is sufficient to see that they both are.  Write $*$ to represent the bijection of characters induced by the character triple isomorphism, so that $\psi = \chi^*$.  Take $H$ to be a Hall $p$-complement of $G$, and let $H^*$ correspond to $H$, and note that $H^*$ will be a Hall $p$-complement of $\Gamma$.

By the Fong-Swan theorem, $\chi^o$ is reducible if and only if there exist characters $\alpha, \beta \in \irr{G}$ such that $\chi^o = \alpha^o + \beta^o$. This occurs if and only if $\chi_H = \alpha_H + \beta_H$. Using the character triple isomorphism, this is equivalent to $\psi_{H^*} = (\alpha^*)_{H^*} + (\beta^*)_{H^*}$ and to $\psi^o = (\alpha^*)^o + (\beta^*)^o$.  We conclude that $\chi^o$ is irreducible if and only if $\psi^o$ is irreducible, and we have proved the first statement.

Suppose that $\chi^o$ is irreducible.  Notice that $\eta \in \irr G$ is a lift of $\chi^o$ if and only if $\eta_H = \chi_H$.   Similarly, $\eta^*$ is a lift of $\psi^o$ if and only if $(\eta^*)_H = \psi_H$.  It follows that $\eta$ is a lift of $\chi^o$ if and only if $\eta^*$ is a lift of $\psi^o$, and thus the character triple isomorphism is a bijection from $L(\chi^o)$ to $L(\psi^o)$, and we have proved the second part of (3).

If $\varphi \in \IBr {G \mid \theta}$, then $\varphi$ has a lift $\tau \in \irr {G \mid \theta}$.  Let $\varphi^* = (\tau^*)^o \in \IBr {\Gamma \mid \gamma}$, and observe that the map $\varphi \rightarrow \varphi^*$ is clearly independent of the choice of $\tau$, and thus well-defined.  Observe that if $\varphi, \sigma \in \IBr {G \mid \theta}$ so that $\varphi^* = \sigma^*$, then $\varphi^*_{H^*} = \sigma^*_{H^*}$.  By the character triple isomorphism, we have $\varphi_H = \sigma_H$, and it follows that $\varphi = \sigma$.  Hence, this map is injective.  Suppose $\kappa \in \IBr {\Gamma \mid \gamma}$.  Let $\zeta \in \irr {\Gamma \mid \gamma}$ be such that $\zeta^o = \kappa$.  Let $\eta \in \irr {G \mid \theta}$ be such that $\eta^* = \zeta$.  Since $\zeta^o$ is irreducible, we have seen that $\eta^o$ must be irreducible.  We see that $(\eta^o)^* = \kappa$, and so the map is surjective, thus proving statement (2).

It remains only to prove the first part of (3).  Suppose that $\chi^o$ is irreducible, and observe that $\eta \in \irr {G \mid \chi^o}$ if and only if $\eta_H = \chi_H + \Theta$, where $\Theta$ is a character of $H$ (or zero) that lies over $\theta$.  This occurs if and only if $\eta^*_{H^*} = \psi_{H^*} + \Theta^*$, and this occurs if and only if $\psi^o$ is a constituent of $(\eta^*)^o$.  This implies that the character triple isomorphism gives a bijection between $\irr {G : \chi^o}$ and $\irr {\Gamma : \psi^o}$.
\end{proof}

We will also make use of a result of Dade regarding the Glauberman correspondence found in \cite{dade}.  If $S$ is a solvable group acting coprimely on a group $K$, then we write $\irri SK$ for the characters in $\irr K$ that are left invariant by the action of $S$.  Recall from Chapter 13 of \cite{text} that Glauberman's correspondence yields a bijection from $\irri SK$ to $\irr {C_K (S)}$.  If $\chi \in \irri SK$, then we use $\chi^*$ to represent the Glauberman correspondent of $\chi$.

Now, suppose that $N$ is a normal subgroup of a group $G$, and that $M$ is a normal subgroup of $G$ that contains $N$ so that $N$ is a Hall subgroup of $M$ and $M/N$ is solvable.  By the Schur-Zassenhaus theorem, we know that $N$ has a complement $Q$ in $M$, and by the Frattini argument, $G = N N_G (Q)$.  It is not difficult to see that $N \cap N_G (Q) = C_N (Q)$.  In \cite{dade}, Dade outlines a proof of the fact that if $\theta \in \irr N$ is $G$-invariant and $\theta^* \in \irr {C_N (Q)}$ is the Glauberman correspondent of $\theta$, then $\theta$ and $\theta^*$ determine the same element in the Schur multiplier of of $G/N \cong N_G (Q)/C_N (Q)$.  (See Chapter 11 of \cite{text} for a discussion of Schur multipliers and characters determining elements in the Schur multiplier.)

Looking at the proof of Theorem 11.28 of \cite{text}, we see that if two characters determine the same element of the Schur multiplier, then the resulting character triples are isomorphic.  Hence, Dade's result can be read as saying that $(G,N,\theta)$ and $(N_G (Q), C_N (Q), \theta^*)$ are character triple isomorphic.  We note that Dade did not publish a complete proof of his result (it is proved in a preprint), but recently, Turull has published a paper with a stronger result (see \cite{turull}).

\begin{theorem}\label{normalizer}
Let $G$ be a $p$-solvable group, let $\varphi \in \IBr G$ lie in a block $B$ with abelian defect group $D$, and let $V$ be the Fong subgroup of $B$ and  $\widehat{\varphi} \in \IBr V$ be the Brauer character that induces to $\varphi$.  Then there exists a Brauer character $\widetilde\varphi \in \IBr {\norm_V (D)}$ that satisfies:
\begin{enumerate}
\item $(\widetilde\varphi)_{\cent_{{\bf O}_{p'} (V)} (D)}$ is homogeneous.
\item The unique irreducible constituent of $(\widetilde\varphi)_{\cent_{{\bf O}_{p'} (V)} (D)}$ corresponds to the unique irreducible constituent of $\widehat\varphi_{{\bf O}_{p'} (V)}$ via the Glauberman correspondence.
\item There is a bijection between $\irr {G : \varphi}$ and $\irr {\norm_V (D) : \widetilde\varphi}$ so that if $\widetilde\chi \in \irr {\norm_V (D) : \widetilde\varphi}$ corresponds to $\chi \in \irr {G : \varphi}$, then $\widetilde\chi(1)/\widetilde\varphi (1) = \chi (1)/\varphi (1)$ and thus the bijection restricts to a bijection between $L_\varphi$ and $L_{\widetilde\varphi}$.
\end{enumerate}
\end{theorem}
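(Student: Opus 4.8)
The plan is to work inside the Fong subgroup and then pass to $\norm_V(D)$ by a single character triple isomorphism of Glauberman type. By Corollary~\ref{blockcomparison}, induction from $V$ to $G$ already gives bijections $\irr{V:\widehat\varphi}\to\irr{G:\varphi}$ and $L_{\widehat\varphi}\to L_\varphi$, and since induction multiplies every degree by $[G:V]$ it preserves the ratios $\chi(1)/\varphi(1)$. Hence it suffices to produce $\widetilde\varphi\in\IBr{\norm_V(D)}$ together with bijections $\irr{V:\widehat\varphi}\to\irr{\norm_V(D):\widetilde\varphi}$ and $L_{\widehat\varphi}\to L_{\widetilde\varphi}$ preserving $\chi(1)/\widehat\varphi(1)$, and satisfying (1) and (2). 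Write $N=\opd(V)$. By the definition of the Fong subgroup (Definition~\ref{fongsubgroup}), $B_V$ covers a $V$-invariant character $\theta\in\irr N$; since $\theta$ is invariant, $\widehat\varphi\in\ibr(V\mid\theta)$ and $\theta$ is the unique irreducible constituent of $\widehat\varphi_N$. Finally I take $D$ to be a Sylow $p$-subgroup of $V$, so that $D$ is an abelian defect group of $B$.

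The key point --- and the one place the abelian hypothesis enters this particular argument --- is that $ND\nrml V$. Indeed, because $N=\opd(V)$ we have $\opd(V/N)=1$, so $V/N$ is $p$-constrained and $\cent_{V/N}(\op(V/N))\le\op(V/N)$. The image $\bar D=DN/N$ is a Sylow $p$-subgroup of $V/N$, it is abelian (as $D\cap N=1$ forces $\bar D\cong D$), and it contains the normal $p$-subgroup $\op(V/N)$; an abelian group centralizes every subgroup of itself, so $\bar D\le\cent_{V/N}(\op(V/N))\le\op(V/N)\le\bar D$. Thus $\bar D=\op(V/N)$ is normal in $V/N$, that is $ND\nrml V$, and $D$ is a complement to the normal Hall $p'$-subgroup $N$ of $ND$. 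Now I would apply Dade's theorem (the Glauberman result recalled before the statement) with $M=ND\nrml V$ and complement $Q=D$: since $\theta$ is $V$-invariant, this yields a character triple isomorphism $(V,N,\theta)\cong(\norm_V(D),\cent_N(D),\theta^*)$, where $\theta^*\in\irr{\cent_N(D)}$ is the Glauberman correspondent of $\theta$ for the coprime action of $D$ on $N$, and where $\cent_N(D)=\cent_{\opd(V)}(D)$.

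Both normal subgroups $N$ and $\cent_N(D)$ are $p'$-groups and $V$ is $p$-solvable, so Lemma~\ref{triples} applies to this isomorphism. Its part (2) sends $\widehat\varphi$ to a Brauer character $\widetilde\varphi\in\ibr(\norm_V(D)\mid\theta^*)$; choosing a lift $\chi\in\irr{V\mid\theta}$ of $\widehat\varphi$, its correspondent is a lift of $\widetilde\varphi$, and part (3) then gives the required bijections $\irr{V:\widehat\varphi}\to\irr{\norm_V(D):\widetilde\varphi}$ and $L_{\widehat\varphi}\to L_{\widetilde\varphi}$. For (1) and (2): $\widetilde\varphi$ lies over $\theta^*$, and $\theta^*$ is $\norm_V(D)$-invariant because $\theta$ is $V$-invariant and the Glauberman correspondence is equivariant under the action of $\norm_V(D)$ (which stabilizes $N$ and $D$); hence $(\widetilde\varphi)_{\cent_N(D)}$ is a multiple of $\theta^*$, giving homogeneity with unique constituent the Glauberman correspondent of $\theta$. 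For the degree statement in (3), I would invoke that a character triple isomorphism multiplies all degrees over $\theta$ by the constant $\theta^*(1)/\theta(1)$, so it preserves the ratios $\chi(1)/\widehat\varphi(1)$; combined with the first paragraph this yields $\widetilde\chi(1)/\widetilde\varphi(1)=\chi(1)/\varphi(1)$, and since lifts are exactly the members of $\irr{G:\varphi}$ of ratio $1$, the bijection restricts to $L_\varphi\leftrightarrow L_{\widetilde\varphi}$. The main obstacle is really the single structural step $ND\nrml V$: without the abelian hypothesis one could only peel off $\op(V/N)$ and would be forced into an induction, composing Glauberman correspondents via their transitivity, whereas here abelianness collapses everything to one application of Dade's theorem; the remaining work is the routine bookkeeping of invariance, homogeneity, and degree ratios.
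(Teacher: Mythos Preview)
Your proof is correct and follows essentially the same route as the paper's: reduce to $V$ via Corollary~\ref{blockcomparison}, use the abelian hypothesis together with Hall--Higman/$p$-constrained $V/N$ to force $ND\nrml V$, apply Dade's Glauberman-type character triple isomorphism $(V,N,\theta)\cong(\norm_V(D),\cent_N(D),\theta^*)$, and then invoke Lemma~\ref{triples}. You even spell out the verification of (1)--(2) and the degree-ratio preservation a bit more explicitly than the paper does, but the structure and the single key step are identical.
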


\begin{proof}
Let $N = {\bf O}_{p'} (V)$.  Let $\alpha$ be the unique irreducible constituent of $\widehat\varphi_N$.  We see that $D$ is a Sylow $p$-subgroup of $V$.  Let $\beta \in \irr {\cent_N (D)}$ be the Glauberman correspondent for $\alpha$.  Let $M/N = {\bf O}_p (V/N)$.  By the Hall-Higman theorem, $\cent_{V/N} (M/N) \le M/N$.  Since $D$ is abelian, this implies that $D \le M$, and so, $M = ND$.  We now use Dade's result mentioned before this lemma to provide a character triple isomorphism between $(V,N,\alpha)$ and $(\norm_V (D), \cent_N (D), \beta)$.  Using Lemma \ref{triples}, we can take $\widetilde{\varphi} = (\widehat{\varphi})^* \in \IBr {\norm_V (D)}$ to correspond to $\widehat\varphi$ under the character triple isomorphism, and by that lemma, we get bijections from $\irr {\norm_V (D) : \widetilde{\varphi}}$ to $\irr {V : \widehat\varphi}$ and from $L_{\widetilde{\varphi}}$ to $L_{\widehat\varphi}$.  Using Theorem \ref{blockcomparison}, we have bijections between $\irr {V : \widehat\varphi}$ and $\irr {G : \varphi}$ and between $L_{\widehat\varphi}$ and $L_\varphi$.  Combining these, we get the desired bijections.  Notice that both induction and the character triple isomorphism preserve the ratio of degrees, so the given ratios hold.
\end{proof}

\section{The proofs of Theorems \ref{necsuf} and \ref{3conditions}}

In this section we prove Theorems \ref{necsuf} and \ref{3conditions}.  One direction of Theorem \ref{necsuf} will follow easily from the theory we have established (and in fact will hold for $p$-solvable groups, or even $\pi$-separable groups in the $\pi$-theory).  The other direction will require a very different argument, and will require $G$ to be solvable.  However, we do not yet know of a counterexample to suggest that the solvable hypothesis cannot be replaced by $p$-solvable.

Our first lemma is essentially the ``invariant'' case of one direction of Theorem \ref{necsuf}.

\begin{lemma} \label{alpha invariant}
Let $p$ be a prime, let $G$ be a $p$-solvable group, let $N = O_{p'} (G)$, and let $\varphi \in \IBr G$ be contained in the block $B$ with defect group $D$.  Suppose $\alpha \in \irr N$ is an irreducible constituent of $\varphi_N$ and is $G$-invariant.  If $D \le Z(G)$, then $|L_\varphi| = |D|$.  Moreover, $\varphi$ is the unique Brauer character of $B$.
\end{lemma}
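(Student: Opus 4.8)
The plan is to show that the hypothesis $D\le Z(G)$ forces $D$ to be a central normal $p$-subgroup equal to $O_p(G)$, and then to analyze $B$ by passing to the quotient $G/D$ and by sorting the characters of $B$ according to their central character on $D$. First I would record the structural reduction $D=O_p(G)=Z(G)_p$: indeed $O_p(G)$ lies in every defect group, so $O_p(G)\le D$, while $D$ is a $p$-subgroup of $Z(G)$, so $D\le Z(G)_p\le O_p(G)$, forcing all three groups to coincide. Since $D=O_p(G)$ is a normal $p$-subgroup it lies in the kernel of every irreducible Brauer character, so inflation gives $\IBr{G}=\IBr{G/D}$ and $\varphi$ is inflated from some $\bar\varphi\in\IBr{G/D}$. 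Because $D$ is a central $p$-subgroup equal to $O_p(G)$, reduction modulo $D$ gives a bijection between the blocks of $G$ and those of $G/D$ under which defect groups pass to their images; thus $B$ corresponds to a block $\bar B$ of $G/D$ with defect group $D/D=1$. A block of defect zero has a unique irreducible ordinary character, equal to its unique irreducible Brauer character, so $l(B)=l(\bar B)=1$. This already proves that $\varphi$ is the unique Brauer character of $B$, and it produces a distinguished lift $\chi_0\in L_\varphi$ (the inflation of the unique character of $\bar B$), characterized among the members of $\irr{B}$ as the unique one with $D\le\ker\chi_0$.

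Next I would count $L_\varphi$ by central characters. Since $l(B)=1$ we have $\chi^o=d_{\chi\varphi}\varphi$ and hence $\chi(1)=d_{\chi\varphi}\varphi(1)$ for every $\chi\in\irr{B}$, so $L_\varphi$ is exactly the set of $\chi\in\irr{B}$ with $d_{\chi\varphi}=1$. As $D\le Z(G)$, each $\chi\in\irr{B}$ restricts to $D$ as $\chi(1)\lambda_\chi$ for a unique linear $\lambda_\chi\in\irr{D}$, and this partitions $\irr{B}$ over $\irr{D}$. The goal is to show that each fibre contains exactly one character and that this character is a lift, which would give $\irr{B}=L_\varphi$ in bijection with $\irr{D}$, hence $|L_\varphi|=|D|$.

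The heart of the argument, and the step I expect to be the main obstacle, is a restriction computation for the projective cover. Let $\Phi_\varphi=\sum_{\chi}d_{\chi\varphi}\chi$ be the ordinary character afforded by the projective cover of $\varphi$. The projective cover is free on restriction to the central $p$-subgroup $D$, so at the level of ordinary characters $\Phi_\varphi|_D=(\Phi_\varphi(1)/|D|)\,\rho_D$, where $\rho_D$ is the regular character of $D$. Comparing the coefficient of a fixed $\lambda\in\irr{D}$ on the two sides and using $\chi(1)=d_{\chi\varphi}\varphi(1)$ yields
\[
\varphi(1)\sum_{\chi\in\irr{B},\ \lambda_\chi=\lambda}d_{\chi\varphi}^2=\frac{\Phi_\varphi(1)}{|D|},
\]
so the quantity $\sum_{\lambda_\chi=\lambda}d_{\chi\varphi}^2$ is a positive integer independent of $\lambda$. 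Evaluating at $\lambda=1_D$, the only character of $B$ with $D\le\ker$ is $\chi_0$, with $d_{\chi_0\varphi}=1$, so this common value equals $1$. A sum of squares of positive integers equal to $1$ has a single term equal to $1$; hence for every $\lambda\in\irr{D}$ there is exactly one $\chi\in\irr{B}$ with $\lambda_\chi=\lambda$, and it satisfies $d_{\chi\varphi}=1$. Therefore $\irr{B}=L_\varphi$ and $|L_\varphi|=|\irr{D}|=|D|$.

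It is worth noting that, once the reduction $D=O_p(G)\le Z(G)$ is in place, neither the $G$-invariance of $\alpha$ nor $p$-solvability is actually used in this route; the statement is really one about blocks with central defect group, and the invariance hypothesis only serves to fix the ambient setup for later applications. The delicate inputs on which the whole count rests are the block correspondence under $G\to G/D$ together with the passage of defect groups to quotients, and the freeness of the restriction of the projective cover to the central $p$-group $D$; both are standard, but I would state them carefully since everything downstream depends on them.
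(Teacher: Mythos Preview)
Your argument is correct, but it follows a genuinely different path from the paper's. The paper exploits the $G$-invariance of $\alpha$ via Fong's theorem (Theorem~10.20 of \cite{blocks}) to conclude that $D$ is a full Sylow $p$-subgroup of $G$; combined with $D\le Z(G)$ this forces the direct product decomposition $G=H\times D$ for a Hall $p$-complement $H$, after which the lifts of $\varphi$ are exactly the characters $\theta\times\lambda$ with $\theta=\varphi_H\in\irr H$ and $\lambda\in\irr D$, giving $|L_\varphi|=|D|$ and $l(B)=1$ by inspection. Your route instead establishes only $D=O_p(G)$, passes to $G/D$ to obtain a defect-zero block, and then counts $\irr{B}$ by central character on $D$ using freeness of the projective cover restricted to $D$. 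Your approach is more general---as you observe, it yields the conclusion for any block with central defect group, with no use of $p$-solvability or the invariance of $\alpha$---at the price of invoking the block correspondence modulo a central $p$-subgroup and the projective-restriction computation. The paper's approach is shorter and entirely elementary in this setting, but it genuinely needs the invariance hypothesis to obtain the direct-product structure $G=H\times D$, which is also convenient for the way the lemma is applied downstream.
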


\begin{proof}
Since $\alpha$ is $G$-invariant, $D$ is a Sylow $p$-subgroup of $G$ by a theorem of Fong (Theorem 10.20 of \cite{blocks}).  Let $H$ be a Hall $p$-complement for $G$.  Since $D$ is a Sylow $p$-subgroup and central, we have that $G = H \times D$.  We see that $\varphi_H = \theta \in \irr H$, and the set of lifts of $\varphi$ is the set $\irr {G \mid \theta}$.  Since $|\irr {G \mid \theta}| = |D|$, this gives $|L_\varphi| = |D|$, as desired.  Finally, since $G$ is the direct product of a $p$-group and a $p'$-group, then $\varphi$ is the unique Brauer character in $B$.
\end{proof}

We now have as a corollary one direction of Theorem \ref{necsuf}.  In fact, we prove slightly more.

\begin{theorem}\label{onedirection}  Let $G$ be a $p$-solvable group and let $B$ be a block of $G$ with abelian defect group $D$, and suppose $\varphi \in \IBr {B}$.  Let $V$ be the Fong subgroup associated to $B$.  If $D$ is central in $\norm_V(D)$, then $|L_{\varphi}| = |D|$.  Moreover, in this case $\varphi$ is the unique Brauer character in $B$.
\end{theorem}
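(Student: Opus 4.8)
The plan is to reduce, in one move, to the situation already handled by Lemma~\ref{alpha invariant}, where the defect group is a \emph{central Sylow} $p$-subgroup of the ambient group. Theorem~\ref{normalizer} has already done the heavy lifting, composing the Fong--Reynolds reduction $G \to V$ with the Glauberman/Dade reduction $V \to \norm_V(D)$: it produces a Brauer character $\widetilde\varphi \in \IBr{\norm_V(D)}$ together with a bijection between $L_\varphi$ and $L_{\widetilde\varphi}$ (part (3)). So, writing $W := \norm_V(D)$, it suffices to prove $|L_{\widetilde\varphi}| = |D|$ inside $W$.

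The key structural point is that $D$ is a central Sylow $p$-subgroup of $W$. Indeed, by the Fong--Reynolds theorem any Sylow $p$-subgroup of the Fong subgroup is a defect group, so $D$ is a Sylow $p$-subgroup of $V$; since $D \le W \le V$, it is then a Sylow $p$-subgroup of $W$ as well. By hypothesis $D$ is central in $W$, so $D \le Z(W)$ and $W = D \times {\bf O}_{p'}(W)$. In particular $W$ is $p$-solvable, the block of $W$ containing $\widetilde\varphi$ has defect group $D$, and the restriction of $\widetilde\varphi$ to ${\bf O}_{p'}(W)$ is irreducible and $W$-invariant. These are exactly the hypotheses of Lemma~\ref{alpha invariant} (with $W$ in place of $G$), which therefore gives $|L_{\widetilde\varphi}| = |D|$. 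Combined with the bijection of the first paragraph, this yields $|L_\varphi| = |D|$.

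For the ``moreover'' assertion I would deduce uniqueness from the count just obtained together with the $k(B)$ bound. Since any lift of $\varphi$ lies in $B$, we have $L_\varphi \subseteq \irr{B}$, whence $|D| = |L_\varphi| \le |\irr{B}| = k(B) \le |D|$, the last inequality being the $k(B)$ theorem, which for $p$-solvable groups is known. Equality forces $\irr{B} = L_\varphi$, so every ordinary character in $B$ restricts to the \emph{irreducible} Brauer character $\varphi$ on the $p$-regular elements. Consequently every $\psi \in \IBr{B}$ occurs as a constituent of some $\chi^o = \varphi$ with $\chi \in \irr{B}$, forcing $\psi = \varphi$; thus $\varphi$ is the unique Brauer character in $B$. (Alternatively one could transport the uniqueness clause of Lemma~\ref{alpha invariant} for $W$ back to $G$, but this would require checking that the reductions of Lemma~\ref{fongred} and Theorem~\ref{normalizer} are compatible with the block-wise count of Brauer characters, whereas the $k(B)$ argument is immediate.)

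I expect the only real subtlety---rather than a genuine obstacle---to be the bookkeeping in the second paragraph: confirming that after the reduction the hypotheses of Lemma~\ref{alpha invariant} actually hold in $W$, i.e.\ that $D$ is Sylow in $W$ (from the Fong pair property) and central (from the hypothesis), so that $W$ splits as $D \times {\bf O}_{p'}(W)$ and the invariant-constituent hypothesis becomes automatic. Once this is in place, the theorem follows by chaining the already-established bijections.
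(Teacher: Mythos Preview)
Your proof of $|L_\varphi| = |D|$ is exactly the paper's: reduce via Theorem~\ref{normalizer} to $W = \norm_V(D)$, verify that $D$ is a central Sylow $p$-subgroup there, and apply Lemma~\ref{alpha invariant}.

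For the uniqueness of $\varphi$ in $B$, the paper takes precisely the alternative route you set aside. It transports the uniqueness clause of Lemma~\ref{alpha invariant} back to $G$ using Lemmas~\ref{fongred} and~\ref{triples}: induction gives a bijection $\IBr{B_V} \to \IBr{B}$, and the character-triple isomorphism gives a bijection $\IBr{V \mid \alpha} \to \IBr{W \mid \beta}$, so the single Brauer character in the block of $W$ forces a single Brauer character in $B$. Your $k(B)$ argument is correct, but it imports the $k(B)$ theorem for $p$-solvable groups, which is considerably deeper than anything else in this section; the paper's approach keeps the proof self-contained within the machinery already developed. The ``bookkeeping'' you anticipated is minimal, since both bijections of Brauer characters are already recorded in the cited lemmas.
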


\begin{proof}
Let $\widetilde{\varphi} \in \IBr {\norm_G (V)}$ be as defined in Theorem \ref{normalizer}.  By Theorem \ref{alpha invariant}, $|L_{\widetilde{\varphi}}| = |D|$ and $\widetilde{\varphi}$ is the unique Brauer character in its block.  On the other hand, using  Theorem \ref{normalizer}, we have $|L_\varphi| = |L_{\widetilde{\varphi}}|$.  Combining these two equations, we have the desired equality.  The final statement follows from Lemmas \ref{fongred} and \ref{triples}.
\end{proof}

We point out that the conclusion that $\varphi$ is the unique Brauer character in $B$ also appears in \cite{Alperin}, though we include our proof here since it follows easily from the results we have developed.

We now begin working toward the proof of the other direction of Theorem \ref{necsuf}.  Fortunately, along the way we will prove most of Theorem \ref{3conditions}.  We note that the first conclusion of the next lemma follows from the $k(B)$-conjecture, but since our proof is simple, we decided to include it.

\begin{lemma} \label{all lifts}
Let $G$ be a $p$-solvable group, let $D$ be a normal, abelian Sylow $p$-subgroup, and let $\varphi \in \IBr G$.  Then the following are true:
\begin{enumerate}
\item $|\irr {G : \varphi}| \le |D|$.
\item If $|\irr {G : \varphi}| = |D|$, then $\irr {G : \varphi} = L_\varphi$.
\end{enumerate}
\end{lemma}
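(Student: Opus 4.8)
The plan is to reduce everything to ordinary character theory of a Hall $p'$-complement. First I would invoke the Schur--Zassenhaus theorem to write $G = D \rtimes H$, where $H$ is a Hall $p'$-complement; here $D = O_p(G)$ since $D$ is a normal Sylow $p$-subgroup. Because $O_p(G)$ lies in the kernel of every simple $\F G$-module, restriction to the $p$-regular elements identifies $\IBr G$ with $\irr{G/D}$, and composing with the isomorphism $H \cong G/D$ identifies $\IBr G$ with $\irr H$. I would then let $\widehat\theta \in \irr H$ be the irreducible character corresponding to $\varphi$ under this identification.

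Next I would read off the decomposition numbers from restriction to $H$. Every $p'$-element of $G$ is conjugate into $H$, so any class function supported on the $p$-regular classes is determined by its values on $H$; in particular $\psi \mapsto \psi_H$ sends $\IBr G$ bijectively onto $\irr H$. Restricting the defining relation $\chi^o = \sum_{\psi} d_{\chi\psi}\,\psi$ to $H$ gives $\chi_H = \sum_{\psi} d_{\chi\psi}\,\psi_H$, and since the $\psi_H$ run over the distinct irreducible characters of $H$, this yields $d_{\chi\psi} = [\chi_H, \psi_H]_H$. Consequently $\irr{G : \varphi}$ is precisely the set of irreducible constituents of the induced character $\widehat\theta^{\,G}$, while $L_\varphi = \{\chi \in \irr G \mid \chi_H = \widehat\theta\}$; note in particular that $L_\varphi \subseteq \irr{G : \varphi}$.

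Both statements then fall out of a single degree count. Writing $\widehat\theta^{\,G} = \sum_{\chi \in \irr{G : \varphi}} d_{\chi\varphi}\,\chi$ and evaluating at the identity gives
\[
|D|\,\widehat\theta(1) = \widehat\theta^{\,G}(1) = \sum_{\chi \in \irr{G : \varphi}} d_{\chi\varphi}\,\chi(1).
\]
For each $\chi \in \irr{G : \varphi}$ we have $d_{\chi\varphi} \ge 1$ and, since $\widehat\theta$ is a constituent of $\chi_H$, also $\chi(1) \ge \widehat\theta(1)$; hence every summand is at least $\widehat\theta(1)$, which immediately gives $|\irr{G : \varphi}| \le |D|$, proving part (1). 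Moreover a summand equals $\widehat\theta(1)$ exactly when $d_{\chi\varphi} = 1$ and $\chi(1) = \widehat\theta(1)$, that is, exactly when $\chi_H = \widehat\theta$, i.e. $\chi \in L_\varphi$. Thus if $|\irr{G : \varphi}| = |D|$ the displayed inequality is an equality, forcing every summand to be minimal and hence every $\chi \in \irr{G : \varphi}$ to lie in $L_\varphi$; combined with $L_\varphi \subseteq \irr{G : \varphi}$ this gives $\irr{G : \varphi} = L_\varphi$, proving part (2).

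The only real content lies in the middle step: justifying that restriction to the complement $H$ faithfully records the decomposition numbers. This rests on the standard facts that $O_p(G)$ acts trivially in characteristic $p$ and that every $p'$-element is conjugate into $H$, so that $\psi \mapsto \psi_H$ is a bijection $\IBr G \to \irr H$ and the $d_{\chi\psi}$ are genuinely the multiplicities $[\chi_H, \psi_H]_H$. Once this dictionary is in place the degree count is routine, which is exactly why the whole argument is short.
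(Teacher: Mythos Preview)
Your argument is correct and follows essentially the same approach as the paper: both pass to a Hall $p$-complement $H$, identify $\varphi$ with an irreducible character of $H$, and then run the degree count on the induced character $(\varphi_H)^G$. The only cosmetic difference is that the paper obtains the irreducibility of $\varphi_H$ by invoking an Isaacs $B_{p'}$-lift (which has $D$ in its kernel), whereas you argue it directly via $\IBr G \cong \irr{G/D} \cong \irr H$ and make the multiplicities $d_{\chi\varphi} = [\chi_H,\widehat\theta]_H$ explicit rather than absorbing them into a remainder term $\Theta$.
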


\begin{proof}
Let $H$ be a Hall $p$-complement of $G$.  Hence, we have $G = HD$.  Let $\chi \in B_{p'} (G)$ be a lift of $\varphi$ (see \cite{bpi} for a discussion of Isaacs' $B_{p'}$ characters, which are canonical lifts of the Brauer characters).  Then we know that $D$ is in the kernel of $\chi$, so $\chi_H = \varphi_H$ is irreducible.  If $\gamma \in \irr {G : \varphi}$, then $\varphi_H$ is a constituent of $\gamma_H$.  Let $\psi_1 = \chi, \psi_2, \dots \psi_m$ be the characters in $\irr {G : \varphi}$.  Then $(\varphi_H)^G = \sum_{i = 1}^m \psi_i + \Theta$ where $\Theta$ is either identically $0$ or a character of $G$.  Observe that
$$
|D| \varphi (1) = (\varphi_H)^G (1) = \sum_{i = 1}^m \psi_i (1) + \Theta (1) \ge m \varphi (1).
$$
This implies that $m \le |D|$, and (1) is proved.  Suppose $m = |D|$, then we must have equality throughout, and this implies that $\Theta (1) = 0$ and $\psi_i (1) = \varphi (1)$ for all $i$.  Therefore, $\Theta$ is identically $0$, and each of the $\psi_i$ is a lift of $\varphi$.  In particular, $\irr {G : \varphi}$ contains only lifts of $\varphi$.
\end{proof}

For a character $\delta$ of a normal subgroup $N$ of $G$, we will let $G_{\delta}$ denote the inertia subgroup of $\delta$.

\begin{lemma} \label{all orbits}
Let $G$ be a $p$-solvable group, and let $\varphi \in \IBr{G}$ be in a block with a normal, abelian defect group $D$.  Let $\delta \in \irr {D}$.  If $|L_\varphi| = |D|$, then $|L_\varphi \cap \irr {G \mid \delta}| = |G:G_\delta|$.
\end{lemma}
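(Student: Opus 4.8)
The plan is to compute the inner product $[\Psi_D,\delta]$ in two different ways, where $\Psi=\sum_{\chi\in L_\varphi}\chi$ is the sum of all the lifts of $\varphi$ and $\Psi_D$ is its restriction to $D$. For the first computation, note that every lift $\chi\in L_\varphi$ has $\chi(1)=\varphi(1)$, since $\chi^o=\varphi$. As $D\nrml G$ is abelian, Clifford's theorem shows that each $\chi_D$ is a nonnegative integer multiple of the sum over a single $G$-orbit on $\irr{D}$; if $\chi$ lies over the orbit of $\delta$, then comparing degrees (and using that $\delta$ is linear) gives $\chi_D=(\varphi(1)/|G:G_\delta|)\sum_{\mu}\mu$, the sum running over the orbit of $\delta$, so that $[\chi_D,\delta]=\varphi(1)/|G:G_\delta|$, while lifts lying over other orbits contribute $0$. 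Writing $m=|L_\varphi\cap\irr{G \mid \delta}|$, we obtain
$$
[\Psi_D,\delta]=m\cdot\frac{\varphi(1)}{|G:G_\delta|}.
$$

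The key step is to identify $\Psi$ with the projective indecomposable character $\Phi_\varphi=\sum_{\chi\in\irrg}d_{\chi\varphi}\chi$. Each lift of $\varphi$ has decomposition number $d_{\chi\varphi}=1$ and $d_{\chi\psi}=0$ for $\psi\neq\varphi$, so $\Psi=\Phi_\varphi$ precisely when $\irr{G : \varphi}=L_\varphi$, i.e. when no ordinary character with reducible restriction to the $p$-regular elements has positive decomposition number at $\varphi$. This is where the hypothesis $|L_\varphi|=|D|$ is used: together with $L_\varphi\subseteq\irr{G : \varphi}$ and the bound $|\irr{G : \varphi}|\le|D|$, it forces $\irr{G : \varphi}=L_\varphi$, which (after reducing to the case that $D$ is a normal Sylow $p$-subgroup, where Lemma \ref{all lifts} applies) is exactly that lemma. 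Granting $\Psi=\Phi_\varphi$, the projectivity of $\Phi_\varphi$ means that $\Psi$ vanishes on every $p$-singular element of $G$, and in particular $\Psi(d)=0$ for all $1\neq d\in D$. Since also $\Psi(1)=|L_\varphi|\varphi(1)=|D|\varphi(1)$, the restriction $\Psi_D$ equals $\varphi(1)$ times the regular character $\rho_D$ of $D$, so that $[\Psi_D,\delta]=\varphi(1)$ for every $\delta\in\irr{D}$.

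Comparing the two computations of $[\Psi_D,\delta]$ then yields $m\cdot\varphi(1)/|G:G_\delta|=\varphi(1)$, that is, $|L_\varphi\cap\irr{G \mid \delta}|=|G:G_\delta|$, as claimed. As a consistency check, summing over a set of representatives of the $G$-orbits on $\irr{D}$ recovers $|L_\varphi|=|D|=|\irr{D}|$, so the lifts are distributed among the orbits in exact proportion to the orbit sizes.

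I expect the main obstacle to be the identification $\Psi=\Phi_\varphi$, equivalently the equality $\irr{G : \varphi}=L_\varphi$. The subtlety is that a normal abelian defect group $D$ equals $\op(G)$ but need not be a Sylow $p$-subgroup, whereas Lemma \ref{all lifts} is phrased for a normal Sylow subgroup; carrying out the reduction to a normal Sylow situation (or proving $|\irr{G : \varphi}|\le|D|$ directly for a normal defect group) without disturbing the $G$-action on $\irr{D}$ that governs the orbit sizes is the delicate point. Once the projective identity $\Psi_D=\varphi(1)\rho_D$ is in hand, everything else is the short degree bookkeeping above.
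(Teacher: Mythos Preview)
Your approach is correct and genuinely different from the paper's. The paper proves the lemma by quoting results from \cite{CoLeNa}: a decomposition $L_\varphi = \bigcup_\lambda \cent_\lambda$ indexed by orbit representatives on $\irr D$, together with the inequality $|\cent_\lambda| \le |G:G_\lambda|$, so that summing to $|D|$ forces equality term by term. Your argument instead identifies $\Psi = \sum_{\chi \in L_\varphi} \chi$ with the projective indecomposable $\Phi_\varphi$, uses the standard vanishing of $\Phi_\varphi$ on $p$-singular elements to obtain $\Psi_D = \varphi(1)\rho_D$, and then compares with the Clifford-theoretic computation of $[\Psi_D,\delta]$. This is a cleaner, more classical block-theory route that avoids the \cite{CoLeNa} machinery entirely.

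The obstacle you flag---that $D$ here is a normal abelian defect group rather than a normal Sylow subgroup, so Lemma~\ref{all lifts} does not apply directly---is real but can be resolved within the paper without circularity. Theorem~\ref{normalizer} supplies a bijection $\irr{G:\varphi} \leftrightarrow \irr{\norm_V(D):\widetilde\varphi}$, and in $\norm_V(D)$ the group $D$ \emph{is} a normal abelian Sylow $p$-subgroup; Lemma~\ref{all lifts} then gives $|\irr{\norm_V(D):\widetilde\varphi}| \le |D|$, hence $|\irr{G:\varphi}| \le |D|$. Neither of those two results relies on the present lemma. With $|\irr{G:\varphi}| \le |D|$ in hand, your identification $\Psi = \Phi_\varphi$ follows, and the rest of your computation happens entirely in $G$, so the $G$-orbit structure on $\irr D$ is never disturbed. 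The price is that your proof now depends on Theorem~\ref{normalizer} (and through it on Dade's character-triple result for the Glauberman correspondence), whereas the paper's proof of this lemma is independent of that reduction and rests only on \cite{CoLeNa}.
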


\begin{proof}
To prove this, we use Theorem 2.6 of \cite{CoLeNa}, and we adopt the notation from that result.  Hence, let ${\mathcal A}$ be a complete set of representatives of the $G$-orbits of $\irr D$ with $\delta \in {\mathcal A}$.  For $\lambda \in {\mathcal A}$, the set $\cent_\lambda$ defined there is contained in $L_\varphi \cap \irr {G \mid \lambda}$.  Since $|L_\varphi|$ equals both $\sum_{\lambda \in {\mathcal A}} |\cent_\lambda|$ and $\sum_{\lambda \in {\mathcal A}} |L_\varphi \cap \irr {G \mid \lambda}|$, it follows that $\cent_{\delta} = L_\varphi \cap \irr {G \mid \delta}$.  By Lemma 2.3 of \cite{CoLeNa}, we have $|L_\varphi \cap \irr {G \mid \delta}| \le |G:G_\delta|$.  Applying Theorem 2.6 of \cite{CoLeNa} again, we have
$$
|L_\varphi| = \sum_{\lambda \in {\mathcal A}} |L_\varphi \cap \irr {G \mid \lambda}| \le \sum_{\lambda \in {\mathcal A}} |G:G_\lambda| = |D|.
$$
Our hypothesis implies that we must have equality throughout, and so, $|L_\varphi \cap \irr {D \mid \delta}| = |G:G_\delta|$, as desired.
\end{proof}

We now look at the case when $|\irr {G : \varphi}| = |D|$.

\begin{theorem} \label{all block}
Let $G$ be a $p$-solvable group, and let $\varphi \in \IBr G$ lie in a block $B$ with abelian defect group $D$.  If $|\irr {G : \varphi}| = |D|$, then $\irr {G : \varphi} = L_\varphi$.  Moreover, $\IBr B = \{ \varphi \}$ and $\irr{B} = L_{\varphi}$.
\end{theorem}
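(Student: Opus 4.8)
The plan is to prove the first assertion by reducing to the case of a normal abelian Sylow $p$-subgroup, and then to deduce the two ``moreover'' statements formally from it together with the bound $k(B) \le |D|$.

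First I would reduce using Theorem \ref{normalizer}. This produces a Brauer character $\widetilde\varphi \in \IBr {\norm_V (D)}$ together with a bijection between $\irr {G : \varphi}$ and $\irr {\norm_V (D) : \widetilde\varphi}$ that restricts to a bijection between $L_\varphi$ and $L_{\widetilde\varphi}$. Since this bijection preserves cardinalities, $|\irr {\norm_V (D) : \widetilde\varphi}| = |\irr {G : \varphi}| = |D|$. In $\norm_V (D)$ the subgroup $D$ is a normal abelian Sylow $p$-subgroup: it is a Sylow $p$-subgroup of $V$ (as noted in the proof of Theorem \ref{normalizer}), it is abelian by hypothesis, and it is normal in its own normalizer while remaining Sylow there. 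Hence Lemma \ref{all lifts}(2) applies and gives $\irr {\norm_V (D) : \widetilde\varphi} = L_{\widetilde\varphi}$. Pulling this back, the bijection of Theorem \ref{normalizer} carries $\irr {G : \varphi}$ onto $\irr {\norm_V (D) : \widetilde\varphi} = L_{\widetilde\varphi}$ and carries the subset $L_\varphi$ onto $L_{\widetilde\varphi}$ as well; since the two sets have the same image under a bijection, $\irr {G : \varphi} = L_\varphi$. This proves the first statement, and in particular $|L_\varphi| = |D|$.

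For the ``moreover'' I would argue directly in $G$, with no further reduction. Every lift of $\varphi$ lies in the block $B$, so $L_\varphi \sbs \irr B$, and together with $|L_\varphi| = |D|$ this gives $|\irr B| \ge |D|$. On the other hand the $k(B)$-theorem for $p$-solvable groups, which is available here since $D$ is abelian, yields $|\irr B| \le |D|$. Hence $|\irr B| = |D| = |L_\varphi|$, and therefore $\irr B = L_\varphi$. Finally, let $\psi \in \IBr B$. By the Fong--Swan theorem $\psi$ has a lift, so $\irr {G : \psi}$ is nonempty; choose $\chi \in \irr {G : \psi}$. Then $\chi \in \irr B = L_\varphi$, so $\chi^o = \varphi$; since $d_{\chi \psi} \neq 0$ and $\chi^o = \varphi$ is irreducible, we conclude $\psi = \varphi$. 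Thus $\IBr B = \{ \varphi \}$, as required.

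The conceptual work sits entirely in the reduction: it must transport both the numerical hypothesis $|\irr {G : \varphi}| = |D|$ and the distinction between lifts and non-lifts correctly, which is precisely what the degree-preserving bijections of Theorem \ref{normalizer} (assembled from the Fong--Reynolds reduction of Lemma \ref{fongred}, Corollary \ref{blockcomparison}, and the Dade--Glauberman character triple isomorphism) are designed to do. The one external ingredient is the bound $k(B) \le |D|$ used for the ``moreover,'' and I expect this, rather than any computation, to be the main point a reader will want flagged. If one wished to avoid citing it, the natural alternative would be to prove $\IBr {\widetilde B} = \{ \widetilde\varphi \}$ inside $\norm_V (D)$ and transport it back; the obstacle there is that the character triple isomorphism underlying Theorem \ref{normalizer} is not \emph{a priori} block-preserving, so carrying out the count for $\IBr B$ in $G$ itself is the cleaner route.
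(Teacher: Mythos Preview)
Your reduction to $\norm_V(D)$ via Theorem \ref{normalizer} and the application of Lemma \ref{all lifts}(2) to conclude $\irr{G:\varphi} = L_\varphi$ is exactly the paper's argument for the first assertion.

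The difference lies in the ``moreover.'' You invoke the $k(B)$-theorem for $p$-solvable groups to get $|\irr B| \le |D|$, and then squeeze. The paper avoids this and instead argues directly from the equality $\irr{G:\varphi} = L_\varphi$ just proved: since every $\chi$ with $d_{\chi\varphi} \neq 0$ satisfies $\chi^o = \varphi$, no irreducible character of $G$ links $\varphi$ to any other Brauer character via decomposition numbers. Blocks being the connected components of this linkage relation, it follows at once that $\IBr B = \{\varphi\}$, and then $\irr B = \{\chi \in \irr G : d_{\chi\varphi} \neq 0\} = \irr{G:\varphi} = L_\varphi$. This is both shorter and self-contained, in keeping with the remark just before Lemma \ref{all lifts} that the paper prefers not to lean on the $k(B)$-conjecture. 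Your argument is correct, but it imports a theorem substantially deeper than what the situation requires; the alternative you sketch at the end (transporting block information back from $\norm_V(D)$) is also unnecessary, since the connectedness argument works directly in $G$.
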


\begin{proof}
Let $(V,\gamma)$ be a Fong pair for $\varphi$ so that $D \le V$.  Let $\widetilde{\varphi} \in \IBr {\norm_V (D)}$ correspond to $\varphi$ as in Theorem \ref{normalizer}.  By that theorem, we have $|L_{\widetilde{\varphi}}| = |L_\varphi|$ and $|\irr {G : \varphi}| = |\irr {\norm_V (D) : \widetilde{\varphi}}| = |D|$.  Notice that $\norm_V (D)$ now satisfies the hypotheses of Lemma \ref{all lifts}, and hence, $L_{\widetilde{\varphi}} = \irr {\norm_V (D) : \widetilde{\varphi}}$.  We now have $|\irr {G : \varphi}| = |\irr {\norm_V (D) : \widetilde{\varphi}}| = |L_{\widetilde{\varphi}}| = |L_\varphi|$.  Since $L_\varphi \subseteq \irr {G : \varphi}$, we obtain $L_\varphi = \irr {G \mid \varphi}$.  Notice that no irreducible character of $G$ has a restriction to $G^o$ with $\varphi$ and some other irreducible Brauer character as constituents.  Hence, $\IBr {B} = \{ \varphi \}$ and $\irr {B} = L_{\varphi}$.
\end{proof}

We should note here that in the next theorem, we need the stronger hypothesis of solvability rather than $p$-solvability that we have been assuming everywhere else.  We need the solvability hypothesis to appeal to Dolfi's theorem.

\begin{theorem} \label{all center}
Let $G$ be a solvable group, let $D$ be a normal, abelian Sylow $p$-subgroup, and let $\varphi \in \IBr G$.  Assume that $\varphi_{{\bf O}_{p'} (G)}$ is homogeneous.  If $|L_\varphi| = |D|$, then $D \le Z (G)$.
\end{theorem}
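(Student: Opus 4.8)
The plan is to reduce everything to the coprime action of $\bar H := H/N$ on $D$, where $N = \opd(G)$, and then to contradict the lift count on a regular orbit. First I would record the structure. Since $D$ is a normal abelian Sylow $p$-subgroup and $N = \opd(G)$ is a normal $p'$-subgroup, $[N,D] \le N \cap D = 1$, so $N \le \cgd$; by Schur--Zassenhaus $G = D \rtimes H$ for a Hall $p$-complement $H$, and $N \le H$. The key observation is that $\cgd = D \times (\cgd \cap H)$ (Dedekind, as $D$ is central in $\cgd$), and $\cgd \cap H = \opd(\cgd)$ is characteristic in the normal subgroup $\cgd$, hence normal in $G$; being a $p'$-group it lies in $N$. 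Combined with $N \le \cgd \cap H$ this forces $\cgd \cap H = N$, i.e. $\cgd = D \times N$. Thus $\bar H = H/N$ acts faithfully and coprimely on the abelian $p$-group $D$, it is solvable, and $D \le Z(G)$ if and only if $\bar H = 1$. Finally, since $D = \op(G)$ we may identify $\IBr{G}$ with $\irr{G/D} = \irr{H}$ and write $\varphi = \varphi_H$; the homogeneity hypothesis says the unique constituent $\alpha$ of $\varphi_N$ is $G$-invariant.

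Next I would translate $|L_\varphi| = |D|$ into an orbit statement. By Lemma~\ref{all orbits}, the equality $|L_\varphi| = |D|$ forces $|L_\varphi \cap \irr{G \mid \delta}| = |G : G_\delta|$ for every $\delta \in \irr{D}$. Since $D$ is normal and abelian, $D \le G_\delta$ and $|G : G_\delta| = |\bar H : \bar H_{\delta}|$ is the size of the $\bar H$-orbit of $\delta$. In particular, over every $\delta$ with nontrivial orbit there must genuinely exist a lift of $\varphi$ lying over $\delta$.

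The heart of the argument is to contradict this on a regular orbit. Suppose $\bar H \ne 1$. The plan is to invoke Dolfi's theorem on regular orbits in coprime actions of solvable groups to produce $\delta \in \irr{D}$ with trivial stabilizer $\bar H_\delta = 1$; this is the one place the full solvability (of the $p$-complement) is used. For such a $\delta$ we have $H_\delta = N$, hence $G_\delta = D \cdot N = D \times N$, and Lemma~\ref{all orbits} demands $|L_\varphi \cap \irr{G \mid \delta}| = |\bar H| \ge 2$, so at least one lift $\chi$ over $\delta$ exists. By the Clifford correspondence $\chi = \psi^G$ with $\psi \in \irr{G_\delta \mid \delta}$; since $G_\delta = D \times N$ and $\chi$ lies over $\alpha$ (because $\chi_N = (\chi^o)_N = \varphi_N = e\alpha$), necessarily $\psi = \delta \times \alpha$. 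Restricting to $p$-regular elements kills the $D$-part, so $\chi^o = ((\delta \times \alpha)^o)^G$ corresponds, under $\IBr{G} = \irr{H}$, to the induced character $\alpha^H$. For $\chi$ to be a lift we need $\chi^o = \varphi$ irreducible, i.e. $\alpha^H = \varphi$ irreducible; but $\alpha$ is $G$-invariant, hence $H$-invariant, so $\langle \alpha^H, \alpha^H \rangle = |\bar H| \ge 2$ and $\alpha^H$ is reducible. This contradiction shows no lift lies over $\delta$, contradicting $|\bar H| \ge 2$. Therefore $\bar H = 1$ and $D \le Z(G)$.

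I expect the main obstacle to be the appeal to Dolfi's theorem: I need the precise statement guaranteeing a genuinely regular orbit (trivial point stabilizer) for a faithful coprime action of a solvable group on an abelian group, and I must confirm that the present situation falls within its hypotheses rather than one of its exceptional configurations. The remainder of the argument is robust, but it relies essentially on the homogeneity assumption, which is exactly what pins the Clifford correspondent of a lift over a regular orbit to $\delta \times \alpha$ and thereby forces the reducible induced character $\alpha^H$; without it the constituent of $\varphi_N$ could be moved by $\bar H$ and the induced character could be irreducible.
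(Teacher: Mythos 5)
Your reduction to the coprime action of $\bar H = H/N$ on $D$, the identification $\cent_G(D) = D\times N$, and the use of Lemma \ref{all orbits} to force lifts over every $\delta$ in a nontrivial orbit are all sound, and your endgame is correct as far as it goes: over a $\delta$ with $G_\delta = ND$ the only character of $G$ lying over $\alpha\times\delta$ is $(\alpha\times\delta)^G$, whose restriction to $H$ is the reducible character $\alpha^H$, so it cannot be a lift. The fatal problem is the step you yourself flagged: the regular orbit theorem you need is false. A solvable group acting faithfully and coprimely on an abelian group need not have a regular orbit: take $\bar H \cong D_8 \le GL(2,3)$ acting on $D = C_3\times C_3$; each of the eight nonzero vectors (equivalently, each nontrivial linear character of $D$) is fixed by one of the four non-central involutions, so every point stabilizer is nontrivial. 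Dolfi's Theorem 1.1, which is what the paper actually invokes, asserts only the weaker statement that there exist \emph{two} characters $\delta,\sigma$ with $\cent_{\bar H}(\delta)\cap\cent_{\bar H}(\sigma)=1$; it does not produce a single $\delta$ with trivial stabilizer. Since your entire contradiction is run on one regular point, the argument collapses exactly there.

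The paper's proof shows what it costs to work with Dolfi's actual conclusion. It first reduces by induction (applying Lemma \ref{all orbits} to quotients $G/M$) to the case where $D$ is minimal normal, and uses Theorem \ref{all block} together with Lemma 4.2 of \cite{Alperin} to show that $\alpha$ is fully ramified with respect to $H/N$, so $|H:N|=e^2$ and $\varphi(1)=e\alpha(1)$. Then, with $\delta,\sigma$ as in Dolfi's theorem, degree counting on the lifts lying over $\alpha\times\delta$ and over $(\alpha\times\sigma)^{G_\delta}$ forces $|G:G_\delta|=|G_\delta:ND|=e$, Gallagher's theorem shows $G_\delta/ND$ and $G_\sigma/ND$ are abelian with $G=G_\delta G_\sigma$, and It\^o's theorem on products of two abelian groups then produces a normal subgroup contradicting the minimality of $D$. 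To rescue your shorter argument you would have to prove that the extra hypotheses in force (full ramification of $\alpha$, $|L_\varphi|=|D|$) exclude the no-regular-orbit configurations; that is not something you can cite, and the $D_8$ example shows it is not a formal consequence of solvability and coprimality alone.
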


\begin{proof}
We work by induction on $|G|$.  Let $H$ be a Hall $p$-complement of $G$.  Hence, we have $G = HD$.  Let $\chi \in B_{p'} (G)$ be a lift of $\varphi$.  Then we know that $D$ is in the kernel of $\chi$, so $\chi_H = \varphi_H$ is irreducible.

Let $N = {\bf O}_{p'} (G)$.  If $ND = G$, then $G = N \times D$, and the result is trivial in this case.  Thus we assume that $ND < G$, and it not difficult to see that $ND = \cent_G (D)$, so $H/N$ acts faithfully on $D$.  Let $\alpha$ be the unique irreducible constituent of $\varphi_N$.  By Lemma \ref{all lifts}, we have $|L_{\varphi}| \leq |\irr {G : \varphi}| \leq |D|$, and thus we have equality throughout.  Thus by Theorem \ref{all block}, we have $\IBr {G \mid \alpha} = \{ \varphi \}$ and $\irr {G \mid \alpha} = \irr {G : \varphi}$ contains only lifts of $\varphi$.  We can now apply Lemma 4.2 of \cite{Alperin} to see that $\alpha$ is fully-ramified with respect to $H/N$, and thus $\alpha \times 1_D$ is fully-ramified with respect to $G/ND$.  Notice that $\chi$ must be the unique irreducible constituent of $(\alpha \times 1_D)^G$, and thus $(\chi (1)/\alpha (1))^2 = |G:ND| = |H:N|$.  Let $\chi (1) = e \alpha (1)$, so $\varphi (1) = \chi (1) = e \alpha (1)$ and $e^2 = |G:ND|$.

Suppose that there exists a normal subgroup $M$ of $G$ so that $1 < M < D$.  Let $\{ \delta_1, \dots, \delta_m \}$ be a complete set of representatives of the $H$ orbits of $\irr {D/M}$.  The number of lifts of $\varphi$ in $G/M$ will equal $$
\sum_{i=1}^m |L_\varphi \cap \irr {G \mid \delta_i}| = \sum_{i=1}^m |G:G_{\delta_i}| = |D:M|
$$
by Lemma \ref{all orbits} applied to the characters $\delta_i$.  Thus, $G/M$ satisfies the hypotheses of the theorem. By the inductive hypothesis, we have that $D/M$ is central in $G/M$, so $[D,G] \le M$.  This implies that $[D,H] \le M$.

Notice that $H$ centralizes $\cent_D (H)$ and $D$ centralizes $\cent_D (H)$ since $D$ is abelian.  Thus, $\cent_D (H)$ is normal in $HD = G$.  If $1 < \cent_D (H) < D$, then $[D,H] \le \cent_D (H)$ by the previous paragraph.  On the other hand, $D = [D,H] \times \cent_D (H)$ by Fitting's theorem.  This is a contradiction.  Thus, either $D = \cent_D (H)$ and $D \le Z (G)$ as desired, or $D = [D,H]$.  Thus, we assume that $D = [D,H]$, and by the previous paragraph, this implies that $D$ is minimal normal in $G$ since otherwise, $[D,H] < D$.

Using a theorem of Dolfi (Theorem 1.1 in \cite{dolfi}), we can find $\delta, \sigma \in \irr D$ so that $\cent_H (\delta) \cap \cent_H (\sigma) = N$.  Observe that $G_{\alpha \times \delta} = G_\delta = \cent_H (\delta) D$ and $G_{\alpha \times \sigma} = G_\sigma = \cent_H (\sigma) D$.  Since all the characters in $\irr {G \mid \alpha \times \delta}$ are lifts of $\varphi$, we have that $|G:G_\delta|$ divides $\varphi (1)/\alpha (1) = e$.  Since $G_{\delta} \cap G_{\alpha \times \sigma} = ND$, we have that $(\alpha \times \sigma)^{G_\delta}$ is irreducible.  Also, the characters in $\irr {G \mid (\alpha \times \sigma)^{G_\delta}}$ are lifts of $\varphi$, so $(\alpha \times \sigma)^{G_\delta} (1) = |G_\delta:ND| \alpha (1) \le \varphi (1) = e \alpha (1)$.  This implies that $|G_\delta:ND| \le e$.  We now have
$$
|G:ND| = |G:G_\delta| |G_\delta:ND| \le e \cdot e = e^2 = |G:ND|,
$$
and thus we must have equality throughout.  This implies that $|G:G_\delta| = |G_\delta:ND| = e$.  Since all the characters in $\irr {G \mid \alpha \times \delta}$ have degree $e \alpha (1)$, we see that all of the characters in $\irr {G_\delta \mid \alpha \times \delta}$ have degree $\alpha (1)$ and so, $\irr {G_\delta \mid \alpha \times \delta}$ contains only extensions of $\alpha \times \delta$.  By Gallagher's theorem, this implies that $G_\delta/ND$ is abelian.  In a similar fashion, $|G_\sigma:ND| = e$ and $G_\sigma/ND$ is abelian.  Since
$$
|G_\delta G_\sigma:ND| = \frac {|G_\delta:ND| |G_\sigma:ND|}{ |(G_\delta \cap G_\sigma):ND|} = \frac {e \cdot e}1 = e^2 = |G:ND|,
$$
we have $G = G_\delta G_\sigma$.  Therefore, $G/ND$ is the product of two abelian subgroups.

Finally, It\^o has proved that if a nontrivial group is the product of two abelian subgroups, then one of those subgroups must contain a nontrivial normal subgroup of the group (see Satz 2 of \cite{ito}).  Thus, we can find $K$ normal in $G$ so that $NM < K \le G_\delta$.  This implies that $\cent_D (K) > 1$.  Since $D$ and $K$ are normal in $G$, we know that $\cent_D (K)$ is normal in $G$.  Because $D$ is minimal normal in $G$, we deduce that $D = \cent_D (K)$, but this implies that $K \le \cent_G (D) = ND$, a contradiction.
\end{proof}

We can now finally prove the other direction of Theorem \ref{necsuf}.

\begin{corollary}
Let $G$ be a solvable group, and let $\varphi \in \IBr G$ lie in a block $B$ with abelian defect group $D$.  Let $V$ be a Fong subgroup for $B$ containing $D$.  If $|L_\varphi| = |D|$, then $D \le Z (\norm_V (D))$.
\end{corollary}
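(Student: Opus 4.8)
The plan is to transport the hypothesis down to the subgroup $W = \norm_V(D)$ and there invoke Theorem~\ref{all center}. Theorem~\ref{normalizer} supplies a Brauer character $\widetilde\varphi \in \IBr{W}$ with $|L_{\widetilde\varphi}| = |L_\varphi| = |D|$, so it will suffice to verify that the pair $(W, \widetilde\varphi)$ meets every hypothesis of Theorem~\ref{all center}; the conclusion $D \le Z(W)$ is then exactly what we want. Solvability of $W$ is inherited from $G$, and $D$ is a normal abelian Sylow $p$-subgroup of $W$: it is normal in $W$ by definition of the normalizer, abelian by assumption, and a Sylow $p$-subgroup of $V$ (hence of $W$) because $V$ is a Fong subgroup.

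The one hypothesis of Theorem~\ref{all center} that requires real work is that $\widetilde\varphi$ restricted to ${\bf O}_{p'}(W)$ be homogeneous. What Theorem~\ref{normalizer}(1) hands us is homogeneity of $\widetilde\varphi$ on $\cent_{{\bf O}_{p'}(V)}(D)$, so the crux is the identification
$$
{\bf O}_{p'}(W) = \cent_{{\bf O}_{p'}(V)}(D).
$$
Writing $N = {\bf O}_{p'}(V)$, the inclusion $\cent_N(D) \le {\bf O}_{p'}(W)$ is immediate: $\cent_N(D)$ is a $p'$-group, and it is normalized by $W$ since $W$ normalizes both $N$ and $D$, so $\cent_N(D) \nrml W$. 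For the reverse inclusion I would argue as follows. Because ${\bf O}_{p'}(W)$ and the normal $p$-subgroup $D$ of $W$ have coprime order, $[{\bf O}_{p'}(W), D] \le {\bf O}_{p'}(W) \cap D = 1$, so ${\bf O}_{p'}(W)$ centralizes $D$. Recall from the proof of Theorem~\ref{normalizer} that ${\bf O}_p(V/N) = ND/N$; by the Hall--Higman theorem $\cent_{V/N}(ND/N) \le ND/N$. The image of ${\bf O}_{p'}(W)$ in $V/N$ therefore centralizes $ND/N$ and so lies in $ND/N \cong D$, a $p$-group, forcing that image to be trivial. Hence ${\bf O}_{p'}(W) \le N$, and combined with the fact that it centralizes $D$ we obtain ${\bf O}_{p'}(W) \le \cent_N(D)$, completing the identification.

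With $\widetilde\varphi$ restricted to ${\bf O}_{p'}(W)$ now homogeneous, every hypothesis of Theorem~\ref{all center} holds for $(W, \widetilde\varphi)$, and that theorem gives $D \le Z(W) = Z(\norm_V(D))$, as desired. I expect the main obstacle to be precisely the equality ${\bf O}_{p'}(W) = \cent_{{\bf O}_{p'}(V)}(D)$: this is the step that aligns the homogeneity conclusion of Theorem~\ref{normalizer} with the homogeneity hypothesis of Theorem~\ref{all center}, and it is where the Fong-subgroup structure (through Hall--Higman) genuinely enters. Everything else is a matter of matching hypotheses, and I note that the appeal to Theorem~\ref{all center} is exactly where the full solvability of $G$ (rather than mere $p$-solvability) becomes necessary, since that theorem relies on Dolfi's theorem.
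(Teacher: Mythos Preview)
Your proposal is correct and follows essentially the same route as the paper: pass to $\widetilde\varphi \in \IBr{\norm_V(D)}$ via Theorem~\ref{normalizer}, transfer the equality $|L_{\widetilde\varphi}| = |D|$, and invoke Theorem~\ref{all center}. The paper's own proof simply asserts that ``$\norm_V(D)$ now satisfies the hypotheses of Theorem~\ref{all center}'' without further comment, whereas you supply the missing verification ${\bf O}_{p'}(\norm_V(D)) = \cent_{{\bf O}_{p'}(V)}(D)$ (via Hall--Higman), which is a genuine and correctly argued detail that the paper leaves implicit.
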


\begin{proof}
Let $\chi$ be a lift of $\varphi$ and let $(V,\gamma)$ be a Fong pair for $(G,\chi)$ that contains $D$.    Let $\widetilde{\varphi} \in \IBr {\norm_V (D)}$ correspond to $\varphi$ as in Theorem \ref{normalizer}.  By that theorem, we have $|L_{\widetilde{\varphi}}| = |L_\varphi| = |D|$.  Notice that $\norm_V (D)$ now satisfies the hypotheses of Theorem \ref{all center}, and by that theorem, we have $D \le Z (\norm_V (D))$.
\end{proof}

We are now almost ready to prove Theorem \ref{3conditions}.  We first need two easy results.  Interestingly, in the next lemma we conclude that the defect group is abelian if the defect group is normal and every character in $\irr {G :\varphi}$ is a lift of $\varphi$.

\begin{lemma} \label{all lifts con}
Let $G$ be a $p$-solvable group, let $D$ be a normal Sylow $p$-subgroup, and let $\varphi \in \IBr G$.  If $\irr {G : \varphi} = L_\varphi$, then $|L_\varphi| = |D|$ and $D$ is abelian.
\end{lemma}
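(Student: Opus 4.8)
The plan is to translate the statement into facts about the induced character $\theta^G$, where $\theta = \varphi_H$ for a Hall $p$-complement $H$ of $G$. First I would record the dictionary coming from the hypothesis that $D$ is a normal Sylow $p$-subgroup: here $D = \op(G)$ and $G/D \cong H$ is a $p'$-group, so inflation identifies $\IBr G$ with $\irr{G/D} \cong \irr H$; under this identification $\varphi$ corresponds to $\theta \in \irr H$ with $\varphi(1) = \theta(1)$, and in particular $p \nmid \varphi(1)$ since $\theta(1)$ divides $|H|$. Restricting the decomposition $\chi^o = \sum_\psi d_{\chi\psi}\psi$ to the $p$-regular classes (represented by $H$) gives $\chi_H = \sum_\psi d_{\chi\psi}\psi_H$, where the $\psi_H$ are distinct irreducible characters of $H$; hence $d_{\chi\varphi} = \langle \chi_H,\theta\rangle = \langle \chi,\theta^G\rangle$ by Frobenius reciprocity. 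Thus $\irr{G:\varphi}$ is exactly the set of irreducible constituents of $\theta^G$, and the multiplicity of $\chi$ in $\theta^G$ equals $d_{\chi\varphi}$.

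Next I would use the hypothesis $\irr{G:\varphi} = L_\varphi$ to pin down $\theta^G$ completely. Every constituent $\chi$ of $\theta^G$ is then a lift, so $\chi^o = \varphi$; this forces $d_{\chi\varphi} = 1$ and $\chi(1) = \varphi(1) = \theta(1)$. Hence $\theta^G = \sum_{\chi \in L_\varphi}\chi$ is multiplicity-free with every constituent of degree $\theta(1)$, and comparing degrees in
\[ |D|\,\theta(1) = \theta^G(1) = \sum_{\chi \in L_\varphi}\chi(1) = |L_\varphi|\,\theta(1) \]
yields $|L_\varphi| = |D|$, which is the first assertion.

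For the second assertion I would restrict $\theta^G$ to the normal subgroup $D$. Since $\theta$ is a character of the complement $H$ and $H \cap D = 1$, the induced character $\theta^G$ vanishes on $D \setminus \{1\}$, so $(\theta^G)_D = \theta(1)\rho_D$, where $\rho_D$ is the regular character of $D$. Combined with the previous paragraph this gives $\sum_{\chi \in L_\varphi}\chi_D = \theta(1)\rho_D$. Because $\rho_D$ contains every $\delta \in \irr D$ with positive multiplicity, each such $\delta$ is a constituent of $\chi_D$ for some lift $\chi \in L_\varphi$. By Clifford's theorem $\chi_D$ is $e$ times the sum of the $G$-conjugates of $\delta$, all of degree $\delta(1)$, so $\delta(1)$ divides $\chi(1) = \varphi(1)$.

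The crux is then a divisibility clash: $\delta(1)$ is a power of $p$ because $\delta \in \irr D$ and $D$ is a $p$-group, whereas $p \nmid \varphi(1)$. Therefore $\delta(1) = 1$ for every $\delta \in \irr D$, i.e. $D$ is abelian. The step that \emph{looks} like it should be hard, forcing $D$ abelian, is in fact the short one; the points that I expect to require the most care — rather than being genuine obstacles — are the bookkeeping in the first paragraph (the identification $\IBr G = \irr{G/D}$ and the equality $d_{\chi\varphi} = \langle\chi,\theta^G\rangle$) and the verification that $(\theta^G)_D$ really is a multiple of the regular character. It is worth noting that the decisive $p$-power-versus-$p'$-degree argument uses only $p$-solvability and makes no appeal to Dolfi's or It\^o's theorems, consistent with the lemma being stated for $p$-solvable groups.
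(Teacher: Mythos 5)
Your proof is correct. The first half (showing $|L_\varphi|=|D|$) is essentially the paper's argument: both identify $\irr{G:\varphi}$ with the constituents of $(\varphi_H)^G$ via Frobenius reciprocity and compare degrees, the only cosmetic difference being that the paper invokes the canonical $\B_{p'}$-lift to see that $\varphi_H$ is irreducible while you use the inflation identification $\IBr{G}\cong\irr{G/D}$. For the second half the routes genuinely diverge. The paper passes to $N={\bf O}_{p'}(G)$, takes a constituent $\alpha$ of $\varphi_N$, reduces to the case where $\alpha$ is $G$-invariant via the Clifford correspondent $\varphi_\alpha$, and then observes that $\irr{G\mid\alpha\times\delta}$ consists of lifts for each $\delta\in\irr{D}$. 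You instead compute $(\theta^G)_D=\theta(1)\rho_D$ by Mackey/vanishing off $D\setminus\{1\}$, which shows directly that every $\delta\in\irr{D}$ lies under some lift, with no reduction step and no mention of ${\bf O}_{p'}(G)$ at all. Both arguments close with the same divisibility clash ($\delta(1)$ is a $p$-power dividing the $p'$-number $\varphi(1)$, so $\delta(1)=1$). Your version is shorter and more self-contained; the paper's version has the mild advantage of setting up the $N$-Clifford-theoretic framework that it reuses elsewhere (e.g.\ in Theorem \ref{all center}), but as a standalone proof of this lemma your restriction-to-$D$ argument is the cleaner one.
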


\begin{proof}
Let $H$ be a Hall $p$-complement of $G$, so that we have $G = HD$.  Let $\chi \in B_{p'} (G)$ be a lift of $\varphi$.  Then we know that $D$ is in the kernel of $\chi$, so $\chi_H = \varphi_H$ is irreducible.  If $\gamma \in \irr {G : \varphi}$, then $\gamma$ is a lift of $\varphi$, so $\gamma_H = \varphi_H$, and by Frobenius reciprocity, $\gamma$ has multiplicity $1$ as a constituent of $(\varphi_H)^G$.  Let $\psi_1, \psi_2, \dots \psi_m$ be the lifts of $\varphi$ where $m = |L_\varphi|$.  Then $(\varphi_H)^G = \sum_{i = 1}^{m} \psi_i$, and therefore $(\varphi_H)^G (1) = |G:H| \varphi (1) = |D| \varphi (1)$ and $(\varphi_H)^G (1) = \sum_{i=1}^m \psi_i (1) = m \varphi (1)$.  This implies that $m = |D|$.

Let $N = {\bf O}_{p'} (G)$, and let $\alpha \in \irr N$ be a constituent of $\varphi_N$.  Let $\varphi_\alpha \in \IBr {G_\alpha \mid \alpha}$ be the Clifford correspondent for $\varphi$.  By Clifford induction, a character $\chi \in \irr{G}$ is a lift of $\varphi$ if and only if $\chi$ is induced from a lift of $\varphi_{\alpha}$.  Moreover, a character $\xi \in \irr{G}$ is in $\irr {G : \varphi}$ if and only if it lies over a character in $\irr {G_{\alpha} : \varphi_{\alpha}}$.  Thus we have $L_{\varphi_\alpha} = \irr {G_\alpha : \varphi_\alpha}$.  Hence, it suffices to assume that $\alpha$ is $G$-invariant.  This implies that $\irr {G : \varphi} = \irr {G \mid \alpha}$.  If $\delta \in \irr D$, this implies that $\irr {G \mid \alpha \times \delta}$ consists of lifts of $\varphi$.  If $\psi \in \irr {G \mid \alpha \times \delta}$, then $\psi (1) = \varphi (1)$ is not divisible by $p$.  Since $\delta (1)$ divides $\psi (1)$, we conclude that $\delta (1) = 1$, so all irreducible characters of $D$ are linear.  Therefore, $D$ is abelian.
\end{proof}

This proves one assertion of Theorem \ref{3conditions}.

\begin{corollary} \label{finalcorollary}
Let $G$ be a $p$-solvable group, and let $\varphi \in \IBr G$ lie in a block $B$ with abelian defect group $D$.  If $\irr {G \mid \varphi} = L_\varphi$, then $|L_\varphi| = |D|$.
\end{corollary}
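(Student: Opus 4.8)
The plan is to reduce the statement to the special case already settled in Lemma \ref{all lifts con}, namely the case in which the defect group is a normal Sylow $p$-subgroup. The reduction is exactly the one packaged in Theorem \ref{normalizer}, which transports the relevant data from $(G,\varphi)$ to $(\norm_V(D),\widetilde\varphi)$, where $V$ is the Fong subgroup of $B$ and $D \le V$. This is the same reduction just used to finish the hard direction of Theorem \ref{necsuf}, only now it is fed into Lemma \ref{all lifts con} rather than into Theorem \ref{all center}.

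First I would invoke Theorem \ref{normalizer} to produce $\widetilde\varphi \in \IBr{\norm_V(D)}$ together with a bijection between $\irr{G : \varphi}$ and $\irr{\norm_V(D) : \widetilde\varphi}$ that restricts to a bijection between $L_\varphi$ and $L_{\widetilde\varphi}$. Since the hypothesis $\irr{G : \varphi} = L_\varphi$ asserts that these two subsets of $\irr G$ literally coincide, and since one bijection is the restriction of the other, the correspondence forces $\irr{\norm_V(D) : \widetilde\varphi} = L_{\widetilde\varphi}$ as well. Next I would observe that $D$ is a normal Sylow $p$-subgroup of $\norm_V(D)$: it is normal there by definition of the normalizer, and because $D$ is a Sylow $p$-subgroup of $V$ it is a fortiori a Sylow $p$-subgroup of the subgroup $\norm_V(D)$. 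Thus $\norm_V(D)$, equipped with $\widetilde\varphi$, meets the hypotheses of Lemma \ref{all lifts con}, which yields $|L_{\widetilde\varphi}| = |D|$. Finally, since the bijection of Theorem \ref{normalizer} gives $|L_\varphi| = |L_{\widetilde\varphi}|$, I conclude that $|L_\varphi| = |D|$.

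I expect no serious obstacle here, as the substance is entirely absorbed by Theorem \ref{normalizer} and Lemma \ref{all lifts con}. The only points requiring care are bookkeeping: checking that the set equality $\irr{G:\varphi} = L_\varphi$ genuinely transfers across the correspondence (immediate once one notes that the two bijections are compatible, one being the restriction of the other), and confirming that $D$ is a truly \emph{normal} Sylow $p$-subgroup of $\norm_V(D)$ so that Lemma \ref{all lifts con} applies verbatim.
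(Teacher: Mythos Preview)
Your proposal is correct and follows essentially the same route as the paper: reduce via Theorem \ref{normalizer} to $\norm_V(D)$, transfer the equality $\irr{G:\varphi}=L_\varphi$ to $\irr{\norm_V(D):\widetilde\varphi}=L_{\widetilde\varphi}$, and then invoke Lemma \ref{all lifts con}. The only cosmetic difference is that the paper transfers the equality by comparing cardinalities (using $L_{\widetilde\varphi}\subseteq\irr{\norm_V(D):\widetilde\varphi}$) rather than by noting that one bijection restricts to the other, but this is the same argument.
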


\begin{proof}
Let $V$ be the Fong subgroup for the block $B$, and let $\widetilde{\varphi} \in \IBr {\norm_V(D)}$ be the character corresponding to $\varphi$ as in Theorem \ref{normalizer}.  By that theorem, we have $|L_{\widetilde{\varphi}}| = |L_\varphi|$ and $|\irr {\norm_V (D) : \widetilde{\varphi}}| = |\irr {G : \varphi}|$.  This implies that $|L_{\widetilde{\varphi}}| = |\irr {\norm_V (D) : \widetilde{\varphi}}|$, and so, $L_{\widetilde{\varphi}} = \irr {\norm_V (D) : \widetilde{\varphi}}$.  We now apply Lemma \ref{all lifts con} to see that $|L_{\widetilde{\varphi}}| = |D|$, and the equality $|L_\varphi| = |L_{\widetilde{\varphi}}| = |D|$ yields the result.
\end{proof}

We now prove Theorem \ref{3conditions}.

\begin{proof}[Proof of Theorem \ref{3conditions}]  Let $V$ be the Fong subgroup for the block containing $\varphi$, and let $\widetilde{\varphi}$ be the character of $\norm_V(D)$ corresponding to $\varphi$.  By Theorem \ref{normalizer}, it is enough to prove the theorem for $\norm_V(D)$ and $\widetilde{\varphi}$, so by replacing $G$ with $\norm_V(D)$, we may assume that $D$ is normal in $G$.

Assume (1), that $|L_{\varphi}| = |D|$.  By Lemma \ref{all lifts}, we have that $|L_{\varphi}| \leq |\irr {G : \varphi}| \leq |D|$, and thus we have equality throughout, proving (2).

Now assume (2), that every character in $\irr {G : \varphi}$ is a lift of $\varphi$.  By Corollary \ref{finalcorollary}, we have that $|D| = |L_{\varphi}| = |\irr {G : \varphi}|$, proving (3).

Finally, if we assume (3), then (1) follows immediately from \ref{all lifts}, since $|\irr {G : \varphi}| \leq |L_{\varphi}| \leq |D|$.
\end{proof}

\section{Example}

This section contains a counterexample showing that Theorem \ref{necsuf} is not true if we replace $\norm_V(D)$ with $\norm_G(D)$.

Let $G$ be the semi-direct product of a group of order 3 acting on a group
of order 91 as a fixed-point free automorphism, so $G$ is a Frobenius
group of order 273.  Take $p = 7$.  Notice that $G$ has 3 linear characters
and the remaining (30) irreducible characters of $G$ have degree 3.  Let $D$
be the subgroup of order 7, and notice that $D$ is normal in $G$ but not
central.  The Brauer characters of $G$ correspond to the characters of
$G/D$.  Let $\chi \in \irr {G/D}$ have degree 3.  Let $\varphi = \chi^o$.  Let $C$ be the cyclic subgroup of order 91, and let $\alpha$ be an irreducible
constituent of $\chi_C$.  Observe that $\alpha^G = \chi$ and $D$ is in the
kernel of $\alpha$.  Let $B$ be the subgroup of order 13, and observe that $C = B \times D$.  Let $\alpha_0 = \alpha_B$, and observe that $\alpha = \alpha_0 \times 1_D$.  Note that $C$ is the stabilizer for $\alpha_0$ in
$G$.  Observe that for each $\delta \in \irr D$, we have $(\alpha_0 \times
\delta)^G$ is a lift of $\varphi$.  By Gallagher's theorem and Clifford's
theorem, these are distinct for distinct $\delta$.  This gives $|D| = 7$
different lifts of $\varphi$.  But $D$ is not central in its normalizer.

\section{Nilpotent blocks}

Recall that all of the results and their proofs so far in this paper also hold in the case of Isaacs' $\pi$-partial characters.  In this section we will put our results into the context of nilpotent blocks.  Much is known about nilpotent blocks in the \lq \lq classical" case (i.e. where the set of primes $\pi$ is the complement of the prime $p$).  In this section we briefly sketch a proof of how our arguments yield a character-theoretic proof of a special case of the Broue-Puig theorem about nilpotent blocks, and thus at least a special case of the Broue-Puig theorem holds for $\pi$-blocks of solvable groups.

Throughout this section, $G$ is a solvable group and $\pi$ is a set of primes.  (Many, though not all, of the results in this section hold for $\pi$-separable groups rather than just solvable groups.)

\begin{definition} \label{nilpotentdef}  Let $Q$ be a $\pi'$-subgroup of $G$, and let $B$ be a $\pi$-block of $G$.  If $b$ is a block of $Q \cent_G(Q)$, we say the pair $(Q, b)$ is a subpair of $G$.  If $b^G = B$, we say the pair $(Q, b)$ is a $B$-subgroup of $G$.  Let $\norm_G(Q, b)$ denote the stabilizer in $\norm_G(Q)$ of the block $b$.

We say the block $B$ of $G$ is nilpotent if $\norm_G(Q, b) / \cent_G(Q)$ is a $\pi'$-group for every $B$-subgroup of $G$.

\end{definition}

The Broue-Puig theorem \cite{broue-puig} shows that if $B$ is a nilpotent $p$-block of a finite group $G$ with defect group $D$, then $l(B) = 1$ and $k(B) = |D|$.  We now give a brief sketch of a proof of a similar result for nilpotent $\pi$-blocks of solvable groups with abelian defect group.

\begin{theorem}\label{nilpotentpi}  Let $G$ be a solvable group, and suppose $B$ is a nilpotent $\pi$-block of $G$ with an abelian defect group $D$.  Then there is a unique $\pi$-partial character $\varphi$ in $B$, and there are exactly $|D|$ ordinary irreducible characters in $D$, all of which lift $\varphi$.
\end{theorem}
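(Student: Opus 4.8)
The plan is to derive the theorem from the machinery already assembled, by showing that nilpotency of $B$ forces the local condition $D \le \Z(\norm_V(D))$ appearing in (the $\pi$-analogue of) Theorem~\ref{onedirection}. Fix $\varphi \in \Ipi{B}$. First I would pass to the Fong pair $(V, B_V)$: the Fong--Reynolds correspondence is compatible with the subpair structure, so $B_V$ is again nilpotent, and by Lemma~\ref{fongred} induction gives bijections $\irr{B_V} \to \irr B$ and $\Ipi{B_V} \to \Ipi{B}$ preserving degrees. Hence it suffices to prove the statement for $(V, B_V)$, and I may assume $G = V$. Then $D$ is an abelian Hall $\pi'$-subgroup of $G$, the radical $N := \opi(G)$ carries a $G$-invariant constituent $\alpha$ of $\varphi_N$, and---exactly as in the proof of Theorem~\ref{normalizer}, via the Hall--Higman bound $\cent_{G/N}(\opid(G/N)) \le \opid(G/N)$---one gets $\opid(G/N) = ND/N$ and $\cent_G(D) = \cent_N(D) \times D$, with $\cent_N(D)$ a $\pi$-group and $D$ a $\pi'$-group.

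Next I would pin down the Brauer correspondent. Let $\beta = \alpha^* \in \irr{\cent_N(D)}$ be the Glauberman correspondent of $\alpha$; since $G$ is solvable the correspondence applies, and because it is canonical and $\alpha$ is $G$-invariant, $\beta$ is invariant under $\norm_G(D)$. Using the direct product $\cent_G(D) = \cent_N(D) \times D$, the $\pi$-block $b_D$ of $\cent_G(D)$ attached to $\beta$ (the block containing all $\beta \times \delta$ with $\delta \in \irr D$, which share the $\pi$-partial character $\beta$) is the Brauer correspondent, so that $(D, b_D)$ is a $B$-subgroup, and $b_D$ is $\norm_G(D)$-invariant because $\beta$ is. Consequently $\norm_G(D, b_D) = \norm_G(D)$.

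The heart of the argument is then a short $\pi$/$\pi'$ squeeze on $\norm_G(D)/\cent_G(D)$. On one hand, applying the nilpotency hypothesis (Definition~\ref{nilpotentdef}) to the $B$-subgroup $(D, b_D)$, together with $\norm_G(D, b_D) = \norm_G(D)$, shows that $\norm_G(D)/\cent_G(D)$ is a $\pi'$-group. On the other hand, $D$ is a normal Hall $\pi'$-subgroup of $\norm_G(D)$, so by Schur--Zassenhaus $\norm_G(D) = D \rtimes K$ with $K$ a Hall $\pi$-subgroup; since $D$ is abelian, $\cent_G(D) = D \times \cent_K(D)$, whence $\norm_G(D)/\cent_G(D) \cong K/\cent_K(D)$ is a $\pi$-group. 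Being simultaneously a $\pi$-group and a $\pi'$-group, $\norm_G(D)/\cent_G(D)$ is trivial, i.e. $D \le \Z(\norm_G(D))$.

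Finally I would read off the theorem. As the Fong subgroup of $(G, B_V)$ is $G$ itself, (the $\pi$-analogue of) Theorem~\ref{onedirection} applies and $D \le \Z(\norm_V(D))$ yields $|L_\varphi| = |D|$ together with the fact that $\varphi$ is the unique $\pi$-partial character in $B$. Feeding $|L_\varphi| = |D|$ into Theorem~\ref{3conditions} and Theorem~\ref{all block} gives $\irr{B} = L_\varphi$, so $B$ contains exactly $|D|$ ordinary irreducible characters, each of which lifts $\varphi$. The step I expect to be the main obstacle is not the group theory---the squeeze above is clean---but the local $\pi$-block bookkeeping: verifying, within Slattery's $\pi$-block framework, that nilpotency descends to the Fong pair and that $b_D$ really is the Brauer correspondent making $(D, b_D)$ a $B$-subgroup. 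In the classical case $\pi = p'$ these are standard facts, and I would either invoke their $\pi$-analogues or record short proofs.
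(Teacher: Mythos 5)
Your proposal is correct and follows essentially the same route as the paper: identify the subpair at $D$ via the Glauberman correspondent $\beta$ of $\alpha$, squeeze $\norm_V(D)/\cent_V(D)$ between being a $\pi'$-group (from nilpotency) and a $\pi$-group (since $D$ is a normal abelian Hall $\pi'$-subgroup of $\norm_V(D)$), and then invoke Theorems \ref{necsuf} and \ref{3conditions}. The only cosmetic difference is that the paper applies the nilpotency condition to the root of $B$ in $G$ and transfers the resulting local statement to $V$ via Lemma 5.1 of the cited paper on cyclic defect groups, whereas you transfer nilpotency itself to $B_V$ first --- the same bookkeeping you correctly flag as the point needing verification.
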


\begin{proof}[Sketch of Proof]  We will show that $D$ is central in $\norm_V(D)$, where $V$ is the Fong subgroup for $V$.  Thus let $V$ be the Fong subgroup for $B$ with corresponding block $B_1$, and note that if $M = \opi(V)$, then $B_1$ covers a unique character $\alpha$ of $M$, and (after conjugating, if necessary), $D$ is a Hall $\pi'$-subgroup of $V$.

One can show that if $(\cent_G(D), b)$ is a root of $B$ (see \cite{cyclic} for a discussion of roots of $\pi$-blocks with abelian defect group), then $(D, b)$ is a $B$-subgroup, and thus $\norm_G(D, b)/\cent_G(D)$ is a $\pi'$-group.  Thus if $(\cent_V(D), b_1)$ is the root of $B_1$, then $\norm_V(D, b_1)/\cent_V(D)$ is a $\pi'$-group.  (See \cite{cyclic} for more details of this argument, here we are essentially repeatedly applying Lemma 5.1 of \cite{cyclic} to go from the root $b$ of $B$ to the root $b_1$ of $B_1$.)

Now $D$ acts coprimely on $M$, and let $\beta \in \irr{\cent_M(D)}$ be the Glauberman correspondent of $\alpha$.  One can show that $\cent_V(D) = D \cent_M(D)$ and thus $\beta$ extends to $\widehat{\beta} \in \irr{\cent_V(D)}$, and since $\alpha$ is invariant in $V$, then $\beta$ and $\widehat{\beta}$ are invariant in $\norm_V(D)$.  Now $\widehat{\beta}$ is the unique character in $b_1$, and we have $\norm_V(D, b_1) = I_{\norm_V(D)}(\widehat{\beta}) = \norm_V(D)$.  Since $D$ is a Hall $\pi'$-subgroup of $V$, then $\norm_V(D)/\cent_V(D)$ is a a $\pi$-group.  But we have already shown that $\norm_V(D)/\cent_V(D)$ is a $\pi'$-group.  Thus $\cent_V(D) = \norm_V(D)$, and thus $D$ is central in $\norm_V(D)$.  By Theorem \ref{necsuf}, we have that $|L_{\varphi}| = |D|$, and by Theorem \ref{3conditions}, we are done.
\end{proof}

Notice that in the \lq \lq classical" case (where $\pi$ is the complement of the prime $p$), Theorem 4.1 of \cite{ehzd} shows that the conditions in Theorem \ref{3conditions} imply that $B$ is nilpotent.  In the $\pi$-case, we see that the \lq \lq hard" direction of Theorem \ref{necsuf} (which required the use of a large orbit theorem) shows that the conditions of Theorem \ref{3conditions} imply that $D$ is central in $\norm_V(D)$, though we do not know yet if this is equivalent to $B$ being nilpotent.

Finally, we mention that it seems reasonable that stronger version of the Broue-Puig theorem should hold in the $\pi$ case, without the requirement that the defect group $D$ is abelian.  That is, if $B$ is a nilpotent $\pi$-block of a solvable group $G$, it should be the case that there is a unique $\pi$-partial character $\varphi$ in $B$ and there should be exactly $|D|$ ordinary irreducible characters in $B$, all of which lift $\varphi$.  However, we do not yet have a proof of this.

\end{document}